\newtheorem{theorem}{Theorem}
\newtheorem{corollary}{Corollary}
\newtheorem{lemma}{Lemma}
\newtheorem{example}{Example}
\renewcommand{\epsilon}{\varepsilon}
\renewcommand{\le}{\leqslant}
\renewcommand{\leq}{\leqslant}
\renewcommand{\ge}{\geqslant}
\renewcommand{\geq}{\geqslant}
\newcommand{\eps}{\varepsilon}
\newcommand{\vphi}{\varphi}
\newcommand{\ds}{\, ds}
\newcommand{\mR}{\mathcal R}
\DeclareMathOperator{\e}{e}
   \newcommand{\N}{\ensuremath{\mathds N}}
   \newcommand{\R}{\ensuremath{\mathds R}}
\begin{document}
\title[]
   {A consistent discrete version of a non-autonomous SIRVS model}
\author{Joaquim P. Mateus}
\address{J. Mateus\\
   Unidade de Investiga\c{c}\~ao para o Desenvolvimento do Interior (UDI)\\
   Instituto Polit\'ecnico da Guarda\\
   6300-559 Guarda\\
   Portugal}
\email{jmateus@ipg.pt}
\author{C\'esar M. Silva}
\address{C. Silva\\
   Departamento de Matem\'atica\\
   Universidade da Beira Interior\\
   6201-001 Covilh\~a\\
   Portugal}
\email{csilva@ubi.pt}
\urladdr{www.mat.ubi.pt/~csilva}
\author{Sandra Vaz}
\address{S. Vaz\\
   Departamento de Matem\'atica\\
   Universidade da Beira Interior\\
   6201-001 Covilh\~a\\
   Portugal}
\email{svaz@ubi.pt}
\date{\today}
\thanks{Joaquim P. Mateus, C\'esar M. Silva and Sandra Vaz were partially supported by FCT through CMA-UBI (project UID/MAT/00212/2013)}
\subjclass[2010]{92D30, 37B55,	65L20, 39A30}
\keywords{Epidemic model, non-autonomous, global stability, numerical method}
\begin{abstract}
A family of discrete non-autonomous SIRVS models with general incidence is obtained from a continuous family of models by applying Mickens non-standard discretization method. Conditions for the permanence and extinction of the disease and the stability of disease-free solutions are determined. The consistency of those discrete models with the corresponding continuous model is discussed: if the time step is sufficiently small, when we have extinction (respectively permanence) for the continuous model we also have extinction (respectively permanence) for the corresponding discrete model. Some numerical simulations are carried out to compare the different possible discretizations of our continuous model using real data.
\end{abstract}
\maketitle
\section{Introduction}

Most of the epidemiological models in the literature are continuous models. In spite of this, recently, there has been a growing interest in discrete-time models~\cite{Diekmann-Heesterbeek-W-2000,Tan-Gao-Fan-DDNS-2015,Wang-Teng-Rehim-DDNS-2014,Wei-Le-DDNS-2015,Liu-Peng-Zhang-AML-2015,Hua-Teng-Zhang-MCS-2014,Hu-Teng-Jia-Zhang-Zhang-ADE-2014}.
In this work, we will use Mickens nonstandard difference (NSFD) scheme to achieve a discretization of a family of continuous epidemiological models with vaccination and general incidence function considered in~\cite{Pereira-Silva-Silva}. We have multiple objectives: firstly, we want to obtain conditions for extinction and permanence of the disease for the discrete family; next, having a continuous and a corresponding discrete family of models, we wish to discuss the problem of consistency of the discrete models with the corresponding continuous ones; finally, we intend to present some simulation results.

The dynamical consistency of a numerical scheme with the associated continuous system is not a precise definition. By the expression ''dynamical consistency'' it is meant that the numerical solutions replicate some of the properties of the continuous systems solutions. For us, dynamical consistency means: whenever there is extinction (respectively permanence) of the disease for the continuous-time model the same holds for the discrete-time one.  Several papers~\cite{Cui-Yang-Zhang-JDEA-2014,Cui-Zhang-JDEA-2015,Ding-Ding-JDEA-2013,Ding-Ding-JDEA-2014,Jang-Elaydi-CAMQ-2003,Mickens-JCAM-1999,Mickens-Washington-JDEA-2012} discuss the dynamical consistency with respect to some particular properties of discrete epidemiological models obtained from continuous models by some NSFD scheme~\cite{Mickens-JDEA-2002}. We note that while the papers cited above consider autonomous models, in the present work we discuss dynamical consistency for a non-autonomous model. To the best of our knowledge, this is the first work where the consistency of a discretized epidemiological model with the original continuous model is discussed in the non-autonomous context.

Regarding our simulation results, we considered two different types of computational experiments. Our first set of simulation results are designed to compare different possible discretizations of our continuous models. After this discussion, we apply our model to a real situation, considering data from the incidence of measles in France in the period 2012-2016. To the possible extent, this data is used to estimate our model parameters and the computational results obtained are compared with real data.

The law of mass action, states that the rate of change in the disease incidence is directly proportional to the product of the number of susceptible and infective individuals, and was the paradigm in the classic models in epidemiology. This is why classical models usually consider a bilinear incidence rate $\beta SI$, where $S$ and $I$ denote respectively the number of susceptible and infective individuals, to model the disease transmission. In spite of this, it is sometimes important to consider other forms of incidence functions. Another usual assumption is the time independence of the parameters model parameters: in fact, the majority of the epidemiological models in the literature are given by a system of autonomous differential or difference equations. Nevertheless, the assumption that the parameters are independent of time is not very realistic in many situations and it is useful to consider non-autonomous models that, for instance, allow the discussion of environmental and demographic effects that change with time~\cite{Khasnis-Nettleman-AMR-2005, Kloeden-Potzsche-Springer-2013}. In this work the family of models considered is non-autonomous and the incidence rates are taken from a large class of functions.

Our model generalizes one obtained by Mickens nonstandard finite difference method from the continuous model~\cite{Pereira-Silva-Silva} (see section~\ref{section:discretization}). In~\cite{Zhang-AMC-2015}, a discrete non-autonomous epidemic model with vaccination and mass action incidence was obtained by Mickens method. We emphasize that, in the particular mass-action case, our model is not exactly similar to the model in~\cite{Zhang-AMC-2015}, although Mickens rules were considered in both. We briefly compare computationally these two slightly different models in Section~\ref{section:simulation}.

The model we will consider is the following
\small{
\begin{equation}\label{eq:ProblemaPrincipal}
\begin{cases}
S_{n+1}-S_n=\Lambda_n-\beta_n\vphi(S_{n+1},I_n)-(\mu_n+p_n)S_{n+1}+\eta_nV_{n+1} \\ I_{n+1}-I_n=\beta_n\vphi(S_{n+1},I_n)+\sigma_n\psi(V_{n+1},I_n)-(\mu_n+\alpha_n+\gamma_n)I_{n+1} \\
R_{n+1}-R_n=\gamma_nI_{n+1} -\mu_nR_{n+1} \\
V_{n+1}-V_n=p_nS_{n+1}-(\mu_n+\eta_n)V_{n+1}-\sigma_n\psi(V_{n+1},I_n)
\end{cases},
\end{equation}
}
$n \in \N$, where the classes $S$, $I$, $R$, and $V$ correspond, respectively, to susceptible, infective, recovered and vaccinated individuals and the parameter functions have the following meanings: $\Lambda_n$ denotes the inflow of newborns in the Susceptible class; the function $\beta_n \vphi$ is the incidence (into the Infective class) from the susceptible individuals; the function $\sigma_n \psi$ is the incidence (into the Infective class) from the vaccinated individuals; $\mu_n$ are the natural deaths; $p_n$ represents the  susceptibles vaccination; $\eta_n$ represents the immunity loss and consequence influx in the susceptible class; $\alpha_n$ are the deaths occurring in the infective class; $\gamma_n$ is the recovery.
We will assume that $(\Lambda_n), (\mu_n), (p_n), (\eta_n), (\alpha_n)$, $(\beta_n)$, $(\sigma_n)$ and $(\gamma_n)$ are bounded and nonnegative sequences and that there are positive constants
$w_\mu$, $w_\Lambda$, $w_p$, $k_{\vphi}$ and $k_{\psi}$ such that:
\begin{enumerate}[H1)]
\item \label{cond-H1} the functions $\vphi:\R^2 \to \R$ and $\psi:\R^2 \to \R$ are nonnegative and differentiable in $(\R_0^+)^2$ and the functions $\R^+_0 \ni x \rightarrow \partial_2 \vphi(x,0)$ and $ \R^+_0 \ni x \rightarrow \partial_2 \psi(x,0)$ are non-decreasing and Lipschitz, with Lipschitz constants $k_{\vphi}$ and $k_{\psi}$.
\item \label{cond-H2} we have $\vphi(x,0)=\psi(x,0)=\vphi(0,y)=\psi(0,y)=0$ for all $x, y \in \R_0^{+}$.
\item \label{cond-P1}
$\displaystyle \limsup_{n \to +\infty} \prod_{k=n}^{n+\omega_\mu} \dfrac{1}{1+\mu_k} <1$;
\item \label{cond-P2} $\displaystyle \liminf_{n \to +\infty} \sum_{k=n+1}^{n+\omega_\Lambda} \Lambda_k > 0  \quad \text{and} \quad \liminf_{n \to +\infty} \sum_{k=n+1}^{n+\omega_p} p_k > 0$;
\item \label{cond-P3}Functions $\R^+ \ni y \mapsto \phi(x,y)/y$ and $\R^+ \ni y \mapsto \psi(x,y)/y$ are non-increasing.
\end{enumerate}

In this work, we prove, when our conditions prescribe extinction (respectively permanence) for the continuous model we also have extinction (respectively permanence) for the corresponding discrete model as long as the time step is smaller than some constant (that depends on some model parameters and on the threshold condition). We also consider a family of examples of the periodic system of period $1$ such that the continuous and the discrete time system with time step $h=1/L$ is not consistent, highlighting the importance of knowing that for time steps smaller than some explicit value we have consistency.

\section{Discretization of the continuous model} \label{section:discretization}

We start with a non-autonomous SIRVS model that is slightly less general than the one considered in~\cite{Pereira-Silva-Silva} and generalizes the one in~\cite{Zhang-Teng-Gao-AA-2008}. Namely, we consider the model:
\begin{equation}\label{eq:ProblemaPrincipal-cont}
\begin{cases}
S'=\Lambda(t)-\beta(t)\vphi(S)I-(\mu(t)+p(t))S+\eta(t)V \\
I'=\left[ \beta(t)\vphi(S)+\sigma(t)\psi(V)-\mu(t)-\alpha(t)-\gamma(t) \right]I \\
R'=\gamma(t)I -\mu(t)R \\
V'=p(t)S-(\mu(t)+\eta(t))V-\sigma(t)\psi(V)I
\end{cases}.
\end{equation}

We assume that the functions $\Lambda, \mu, p, \eta, \alpha$, $\beta$, $\sigma$ and $\gamma$ belong to the class $C^1(\R_0^+)$, are nonnegative and bounded. We also require that:
\begin{enumerate}[C1)]
\item \label{cond-C1} the functions $\vphi:\R \to \R$ and $\psi:\R \to \R$ are nonnegative, non decreasing, differentiable and Lipschitz with Lipschitz constants $k_{\vphi}$ and $k_{\psi}$ respectively;
\item \label{cond-C2} $\vphi(0)=\psi(0)=0$;
\item \label{cond-C3} there is $\omega>0$ such that $\displaystyle \liminf_{t \to +\infty} \int_t^{t+\omega} \mu(s) \ds > 0$.
\end{enumerate}
In order to obtain threshold conditions for model~\eqref{eq:ProblemaPrincipal-cont}, it was considered in~\cite{Pereira-Silva-Silva} the following auxiliary system:
\begin{equation}\label{eq:SistemaAuxiliar-Pereira-Silva-Silva}
\begin{cases}
x'=\Lambda(t) - [\mu(t) + p(t)] x + \eta(t)y \\
y'=p(t)x - [\mu(t)+\eta(t)] y.
\end{cases}
\end{equation}
and for each solution $(x^*(t),y^*(t))$ of~\eqref{eq:SistemaAuxiliar-Pereira-Silva-Silva} with positive initial conditions, it was shown that the numbers
\[
R^\ell_C(\lambda)  = \liminf_{t \to \infty} \int_t^{t+\lambda} \beta(s)\varphi(x^*(s))+\sigma(s)\psi(y^*(s))-\mu(s)-\alpha(s)-\gamma(s) \ds
\]
and
\[
R^u_C(\lambda)  = \limsup_{t \to \infty} \int_t^{t+\lambda} \beta(s)\varphi(x^*(s))+\sigma(s)\psi(y^*(s))-\mu(s)-\alpha(s)-\gamma(s) \ds
\]
are independent of the particular solution.

Using the above numbers, the following results are contained in results obtained in~\cite{Pereira-Silva-Silva}:

\begin{theorem}[Theorem 1 of~\cite{Pereira-Silva-Silva}] \label{teo:Permanence-Pereira-Silva-Silva}
Assume that conditions~C\ref{cond-C1}),~C\ref{cond-C2}) and~C\ref{cond-C3}) hold. Then, if there is a constant $\lambda>0$ such that $R^\ell_C (\lambda) >0$ then the infectives $I$ are permanent
in system~\eqref{eq:ProblemaPrincipal-cont}.
\end{theorem}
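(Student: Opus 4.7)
The plan is to argue by contradiction: assume $I$ is not permanent and derive that the exponential growth rate forced on $I$ by the threshold $R^\ell_C(\lambda)>0$ is incompatible with $I$ staying small. This is a standard strategy for non-autonomous persistence, but several comparison steps need to be combined.

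First, I would establish the basic a priori facts about solutions of~\eqref{eq:ProblemaPrincipal-cont}. Using the nonnegativity of $\varphi,\psi$ and condition~C\ref{cond-C3}), together with the boundedness of the parameter functions, a standard comparison yields the invariance of the positive cone and an absorbing set: $S(t)$, $I(t)$, $R(t)$, $V(t)$ are ultimately bounded above by a constant depending only on the bounds of $\Lambda,\mu$. Next, I would use the auxiliary system~\eqref{eq:SistemaAuxiliar-Pereira-Silva-Silva}: by a comparison argument for the $(S,V)$ equations with the $I$-terms viewed as perturbations (observing $\beta\varphi(S)I\ge 0$ and $\sigma\psi(V)I\ge 0$), one obtains $S(t)\le x^*(t)+o(1)$ and $V(t)\le y^*(t)+o(1)$, and conversely if $I(t)\le\varepsilon$ on a long interval then $S(t)\ge x^*(t)-C\varepsilon$ and $V(t)\ge y^*(t)-C\varepsilon$ for a constant $C$ independent of $\varepsilon$; here one exploits that~\eqref{eq:SistemaAuxiliar-Pereira-Silva-Silva} is linear, cooperative and, by~C\ref{cond-C3}), has a unique globally attracting positive bounded solution $(x^*,y^*)$.

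The core step is weak persistence. Suppose for contradiction that there exists a positive solution with $\limsup_{t\to\infty} I(t) = 0$, i.e.\ $I(t)\le\varepsilon$ for all $t$ large and any prescribed $\varepsilon>0$. By the previous paragraph, $S(t)$ and $V(t)$ lie in an $O(\varepsilon)$-neighborhood of $(x^*(t),y^*(t))$ for large $t$. Using the Lipschitz property of $\varphi,\psi$ in~C\ref{cond-C1}), we get, for some constant $K$ depending on $k_\varphi,k_\psi$ and the sup-norms of $\beta,\sigma$,
\begin{equation*}
\int_t^{t+\lambda}\!\!\bigl[\beta\varphi(S)+\sigma\psi(V)-\mu-\alpha-\gamma\bigr](s)\ds \;\ge\; \int_t^{t+\lambda}\!\!\bigl[\beta\varphi(x^*)+\sigma\psi(y^*)-\mu-\alpha-\gamma\bigr](s)\ds - K\lambda\varepsilon.
\end{equation*}
Taking the $\liminf$ and using the definition of $R^\ell_C(\lambda)$, the right-hand side is at least $R^\ell_C(\lambda)-K\lambda\varepsilon$, which is positive for $\varepsilon$ small enough. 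Dividing the $I$ equation by $I$ and integrating over successive windows of length $\lambda$ then gives $I(t+n\lambda)\ge I(t)\exp(n\rho)$ for some $\rho>0$, contradicting $I(t)\to 0$. Hence the system is weakly persistent: there exists $m_0>0$ (independent of the solution) with $\limsup_{t\to\infty} I(t)\ge m_0$.

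Finally, I would upgrade weak persistence to (uniform strong) persistence, i.e.\ $\liminf_{t\to\infty} I(t)\ge m_1$ for some $m_1>0$. The idea is again by contradiction: if $\liminf I(t)=0$ while $\limsup I(t)\ge m_0$, one picks a sequence of time intervals on which $I$ drops from $m_0/2$ to an arbitrarily small $\varepsilon$ and then climbs back; on the decreasing portion $I$ is small, so by the comparison above the logarithmic growth rate of $I$ is positive over any $\lambda$-window entirely contained in that portion, yielding that $\log I$ cannot decrease by more than a fixed amount per unit time — a uniform bound that prevents $I$ from reaching $\varepsilon$ once it drops below a definite $m_1>0$. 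Combined with the ultimate upper bound from the first step, this gives permanence. I expect the main technical obstacle to be precisely this last passage from weak to strong persistence in the non-autonomous framework, where one must quantify how long the excursions of $I$ toward $0$ can last using only the averaged condition $R^\ell_C(\lambda)>0$ rather than pointwise information on the coefficients.
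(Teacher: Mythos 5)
This theorem is not proved in the paper itself: it is quoted verbatim from the reference as Theorem~1 of~\cite{Pereira-Silva-Silva}. Your sketch follows essentially the same strategy that the paper uses for the discrete analogue (Theorem~\ref{teo:Permanence} in Appendix~\ref{Appendix:A}): first weak persistence by comparison of $(S,V)$ with the attracting solution $(x^*,y^*)$ of~\eqref{eq:SistemaAuxiliar-Pereira-Silva-Silva} plus the averaged growth condition $R^\ell_C(\lambda)>0$, then an upgrade to uniform strong persistence via an excursion argument that uses the bounded decay rate $I'/I\ge-(\mu+\alpha+\gamma)$ to guarantee excursions toward $0$ are long enough for the comparison to apply; this matches the intended proof, and you correctly identify the weak-to-strong passage as the delicate step.
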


\begin{theorem}[Theorem 2 of~\cite{Pereira-Silva-Silva}] \label{teo:Extinction-Pereira-Silva-Silva}
Assume that conditions~C\ref{cond-C1}),~C\ref{cond-C2}) and~C\ref{cond-C3}) hold. Then
if there is a constant $\lambda>0$ such that $R^u_C(\lambda) <0$ then the infectives $I$ go to extinction in system~\eqref{eq:ProblemaPrincipal-cont}.
\end{theorem}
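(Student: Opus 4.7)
The plan is to combine the explicit representation of $I(t)$ as an exponential with an asymptotic comparison between $(S,V)$ and a positive solution of the auxiliary system~\eqref{eq:SistemaAuxiliar-Pereira-Silva-Silva}. Since the second equation of~\eqref{eq:ProblemaPrincipal-cont} is linear in $I$, we have
\[
I(t) = I(t_0)\exp\left(\int_{t_0}^{t}\left[\beta(s)\varphi(S(s))+\sigma(s)\psi(V(s))-\mu(s)-\alpha(s)-\gamma(s)\right]\ds\right),
\]
so it suffices to show that the exponent tends to $-\infty$ as $t\to\infty$.

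To handle the nonlinear terms, I would first establish a comparison lemma: for any positive solution $(x^*,y^*)$ of~\eqref{eq:SistemaAuxiliar-Pereira-Silva-Silva} and any $\eps>0$, there exists $T>0$ such that $S(t)\le x^*(t)+\eps$ and $V(t)\le y^*(t)+\eps$ for all $t\ge T$. Setting $z_1=x^*-S$ and $z_2=y^*-V$ and subtracting the $S$- and $V$-equations of~\eqref{eq:ProblemaPrincipal-cont} from the corresponding equations in~\eqref{eq:SistemaAuxiliar-Pereira-Silva-Silva}, one obtains a linear cooperative system for $(z_1,z_2)$ with non-negative forcing terms $\beta(t)\varphi(S)I$ and $\sigma(t)\psi(V)I$. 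Combining the Metzler structure (non-negative off-diagonal entries) of the linear part with the dissipativity supplied by C\ref{cond-C3}) yields the desired asymptotic upper bounds, irrespective of initial data.

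Once the comparison is in place, the monotonicity and Lipschitz constants in C\ref{cond-C1}) give $\varphi(S(s))\le\varphi(x^*(s))+k_\vphi\eps$ and $\psi(V(s))\le\psi(y^*(s))+k_\psi\eps$, so for $t\ge T$
\[
\int_t^{t+\lambda}\left[\beta\varphi(S)+\sigma\psi(V)-\mu-\alpha-\gamma\right](s)\ds \le \int_t^{t+\lambda}\left[\beta\varphi(x^*)+\sigma\psi(y^*)-\mu-\alpha-\gamma\right](s)\ds + C\lambda\eps,
\]
with $C=k_\vphi\sup\beta+k_\psi\sup\sigma$. Choosing $\eps$ small and $T$ large, the definition of $R^u_C(\lambda)$ together with the hypothesis $R^u_C(\lambda)<0$ implies that the right-hand side is bounded above by a strictly negative constant $-\delta$ uniformly in $t\ge T$. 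Iterating this estimate over consecutive intervals of length $\lambda$ forces the exponent in the representation of $I(t)$ to tend to $-\infty$, giving $I(t)\to 0$.

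The main obstacle is the comparison lemma itself: one needs to dominate $(S,V)$ by $(x^*,y^*)$ asymptotically with no a priori control on initial data, exploiting the cooperative structure of the $(z_1,z_2)$ system together with its uniform dissipativity. A useful simplification is that $R^u_C(\lambda)$ is independent of the particular positive solution of~\eqref{eq:SistemaAuxiliar-Pereira-Silva-Silva}, so one may choose any convenient reference $(x^*,y^*)$; everything else in the proof is then a relatively routine Gronwall-type iteration.
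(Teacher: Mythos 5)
The paper does not actually prove this statement --- it is imported verbatim as Theorem~2 of the cited reference --- but your argument is the standard one and is the exact continuous analogue of the extinction proof the authors give for the discrete model in Appendix~A: dominate $(S,V)$ asymptotically by a positive solution of the auxiliary system, use monotonicity and the Lipschitz constants from C1) to perturb the integrand by $O(\eps)$, and then exploit the linearity of the $I$-equation together with $R^u_C(\lambda)<0$ to drive the exponent to $-\infty$. The only point needing care is the comparison step when $S(0)>x^*(0)$ or $V(0)>y^*(0)$, and you correctly dispose of it by invoking the independence of $R^u_C(\lambda)$ from the choice of reference solution (pick one dominating the initial data, then transfer the bound using the global attractivity coming from C3)); so the proposal is sound.
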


In the literature, several models were discretized using Mickens NSFD schemes~\cite{Allen-Jones-Martin-MB-1994, Allen-MB-1975, Cooke-TPB-1975,DeJong-Diekmann-Heesterbeek-MB-1994, Hattaf-Lashari-Boukari-Yousfi-DEDS-2015, Kaitala-Heino-Getz-BMB-1997, Lena-Serio-MB-1982, Lefevre-Malice-MM-1986, Lefevre-Picard-MB-1989, Longini-MB-1986, Mickens-B-1982, Spicer-BMB-1979, Viaud-MB-1993, West-Thompson-MB-1997}. Next, we will apply Micken's non-standard method to obtain a discrete version of system~\eqref{eq:ProblemaPrincipal-cont}.

Let $\phi:\R_0^+ \to \R$ be a positive continuous function such that
\begin{equation}\label{eq:derivative-denominator}
\lim_{h \to 0} \phi(h)=0.
\end{equation}
Given $h \in \R^+$, we let $t=nh$, with $n \in \N$, and identify $S'(t)$ with
$$
\dfrac{S(nh+h)-S(nh)}{\phi(h)}.
$$
After deciding a non-local representation for the incidence function and that terms that do not correspond to an interaction will be considered in the $n+1$ time, the first equation in~\eqref{eq:ProblemaPrincipal-cont} becomes
\[
\begin{split}
S((n+1)h)-S(nh)
& =\phi(h)\left[\Lambda(nh)-\beta(nh)\vphi(S((n+1)h))I(nh)\right.\\
& \quad \left. -(\mu(nh)+p(nh))S((n+1)h)+\eta(nh)V(nh+h)\right].
\end{split}
\]
Writing $S_n=S(nh)$, $I_n=I(nh)$, $V_n=V(nh)$, $\Lambda_n=\phi(h)\Lambda(nh)$, $\beta_n=\phi(h)\beta(nh)$,
$\mu_n=\phi(h)\mu(nh)$, $p_n=\phi(h)p(nh)$ and $\eta_n=\phi(h)\eta(nh)$, we have
 $$S_{n+1}-S_n=\Lambda_n-\beta_n\vphi(S_{n+1})I_n-(\mu_n+p_n)S_{n+1}+\eta_nV_{n+1}.$$
Proceeding similarly for the other equations, we obtain the following discrete model
\small{
\begin{equation}\label{eq:ProblemaPrincipal-disc}
\begin{cases}
S_{n+1}-S_n=\Lambda_n-\beta_n\vphi(S_{n+1})I_n-(\mu_n+p_n)S_{n+1}+\eta_nV_{n+1} \\ I_{n+1}-I_n=\beta_n\vphi(S_{n+1})I_n+\sigma_n\psi(V_{n+1})I_n-(\mu_n+\alpha_n+\gamma_n)I_{n+1} \\
R_{n+1}-R_n=\gamma_nI_{n+1} -\mu_nR_{n+1} \\
V_{n+1}-V_n=p_nS_{n+1}-(\mu_n+\eta_n)V_{n+1}-\sigma_n\psi(V_{n+1})I_n
\end{cases},
\end{equation}
}
$n \in \N_0$. We will consider a model that contains this one to obtain some of our results. Namely, based on model~\eqref{eq:ProblemaPrincipal-disc}, in sections~\ref{section:PE} and~\ref{section:SR} we will study model~\eqref{eq:ProblemaPrincipal} that has a more general form for the incidence function.

Now, we need to make some definitions. We say that:
\begin{enumerate}[i)]
\item the infectives $(I_n)$ are \emph{permanent} if for any solution $(S_n,I_n,R_n,V_n)$ of~\eqref{eq:ProblemaPrincipal} with initial conditions $S_0,I_0,R_0,V_0>0$ there are constants $0<m<M$ such that
\[
m < \liminf_{n \to \infty} I_n \le \limsup_{n\to \infty} I_n < M;
\]
\item the infectives $(I_n)$ go to \emph{extinction} if for any solution $(S_n,I_n,R_n,V_n)$ of~\eqref{eq:ProblemaPrincipal} with initial conditions $S_0,I_0,R_0,V_0\ge 0$ we have
    $\displaystyle \lim_{n \to \infty} I_n = 0$.
\end{enumerate}
Similar definitions can be made for the other compartments. For instance, if
there exists constants $0<m<M$ such that for any solution $(S_n,I_n,R_n,V_n)$ of~\eqref{eq:ProblemaPrincipal}
with initial conditions $S_0,I_0,R_0,V_0 > 0$ we have
\[
m < \liminf_{n \to \infty} S_n \le \limsup_{n \to \infty} S_n < M
\]
we say that the susceptibles are permanent.

\section{Permanence and Extinction in the Discrete Model} \label{section:PE}

In this section, we will extend the results obtained for the model with the usual mass action incidence in~\cite{Zhang-AMC-2015} to our generalized family of models. Namely, suitable thresholds are defined and conditions for persistence and extinction of the disease are obtained. As a corollary of our results, we consider the periodic case where we have a unique number that establishes the boundary between the regions of permanence and extinction. Although the proofs of our results are inspired in~\cite{Zhang-AMC-2015}, some difficulties must be dealt with. In particular, it was necessary to understand the right conditions to impose to the incidence functions in order to overcome the technical difficulties.

To lighten the reading, the proofs of our results are presented in appendix~\ref{Appendix:A}.

\subsection{Auxiliary results} \label{subsection:AR}
Consider the auxiliary system,
\begin{equation}\label{eq:SistemaAuxiliar}
\begin{cases}
x_{n+1}=\dfrac{\Lambda_n + \eta_n y_{n+1} +x_n}{1+\mu_n+p_n} \\[2mm]
y_{n+1}=\dfrac{p_n x_{n+1} +y_n}{1+\mu_n+\eta_n}
\end{cases}.
\end{equation}
Note that the auxiliary system describes the behaviour of the system in the absence of infection. If $(\Lambda_n)$, $(\mu_n)$, $(p_n)$, $(\eta_n)$, $(\alpha_n)$, $(\mu_n)$, $(\sigma_n)$ and $(\beta_n)$ are constant sequences then~the linear system \eqref{eq:SistemaAuxiliar} becomes autonomous and corresponds to the linearization of the equations for $(S_n)$ and $(V_n)$ in the classical (autonomous) SIRVS model.

In order to proceed we need to recall some notions. A solution $(u_n)$ of some system of difference equations $u_{n+1}=f_n(u_n)$ is said to be \emph{attractive} if for all $n_0 \in \N$ and all $\eps>0$ there is $\sigma(n_0)>0$ and $T(\eps,n_0,u_0) \in \N$ such that if $(\bar u_n)$ is a solution with $\|u_0-\bar u_0\|<\sigma(n_0)$ then $\|u_n-\bar u_n\|<\eps$, for all $n \ge n_0+T(\eps,n_0,u_0)$. Additionally, if some solution is attractive and we can take $T$ to be only dependent on $\eps$, we say that it is \emph{uniformly attractive}.

The following theorem furnishes some simple properties of system~\eqref{eq:SistemaAuxiliar}.

\begin{lemma}[Lemma 2.2 of~\cite{Zhang-AMC-2015}]\label{lema:auxSystem}
Assume that conditions~H\ref{cond-P1}) and~H\ref{cond-P2}) hold. Then
\begin{enumerate}[i)]
\item 
all solutions $(x_n,y_n)$ of system ~\eqref{eq:SistemaAuxiliar} with initial condition $x_0 \ge 0$ and $y_0 \ge 0$ are nonnegative for all
$n \in \N_0$;
\item \label{cond-2-aux} each fixed solution $(x_n,y_n)$ of~\eqref{eq:SistemaAuxiliar}
is bounded and globally uniformly attractive for all $n \in \N_0$;
\item \label{cond-3-aux} if $(x_n,y_n)$ is a solution of~\eqref{eq:SistemaAuxiliar} and $(\tilde x_n, \tilde y_n)$
is a solution of the system
\begin{equation}\label{eq:SistemaAuxiliar2}
\begin{cases}
x_{n+1}=\dfrac{\Lambda_n + \eta_n y_{n+1} +x_n+f_n}{1+\mu_n+p_n} \\[2mm]
y_{n+1}=\dfrac{p_n x_{n+1} +y_n+g_n}{1+\mu_n+\eta_n}
\end{cases}.
\end{equation}
with $(\tilde
x_0, \tilde y_0)=(x_0,y_0)$ then there is a constant $L>0$,
only depending on $\mu_n$, satisfying
\[
\sup_{n \in \N_0} \left\{ |\tilde x_n - x_n| + |\tilde y_n -y_n|\right\} \le L \sup_{n \in \N_0} \left( |f_n|+|g_n| \right);
\]
\item \label{cond-4-aux}
there exists constants $m,M>0$ such that,
for each solution $(x_n,y_n)$ of~\eqref{eq:SistemaAuxiliar}, we have
\[
\begin{split}
& m \le \liminf_{n \to \infty} x_n \le \limsup_{n \to \infty} x_n \le M, \\
& m \le \liminf_{n \to \infty} y_n \le \limsup_{n \to \infty} y_n \le M.
\end{split}
\]
\item \label{cond-5-aux} when the system~\eqref{eq:SistemaAuxiliar} is $\omega$--periodic, it has a unique positive $\omega$--periodic solution which is globally uniformly attractive.
\end{enumerate}
\end{lemma}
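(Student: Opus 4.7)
The plan is to exploit the near-triangular structure of~\eqref{eq:SistemaAuxiliar} and, crucially, the fact that adding the two equations cancels the coupling terms $\eta_n y_{n+1}$ and $p_n x_{n+1}$. For nonnegativity in (i), I would solve the $2\times 2$ linear system for $(x_{n+1},y_{n+1})$ in terms of $(x_n,y_n)$: substituting the second equation into the first yields the determinant $(1+\mu_n+p_n)(1+\mu_n+\eta_n)-\eta_n p_n$, which equals $1+2\mu_n+\mu_n^2+\mu_n(p_n+\eta_n)+p_n+\eta_n>0$, and both resulting expressions for $x_{n+1}$ and $y_{n+1}$ are ratios of nonnegative quantities, so induction on $n$ closes the argument.

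For (ii), setting $z_n=x_n+y_n$ and summing the two equations, the cross terms cancel and I obtain the scalar equation $z_{n+1}=(z_n+\Lambda_n)/(1+\mu_n)$, whose solution is explicit; by~H\ref{cond-P1}) the homogeneous part decays geometrically along blocks of length $\omega_\mu$, and the boundedness of $(\Lambda_n)$ then gives a uniform upper bound. For uniform attractivity, I apply the same trick to the difference of two solutions: with $u_n=x_n-\bar x_n$, $v_n=y_n-\bar y_n$, taking absolute values in each equation and summing yields $(1+\mu_n)(|u_{n+1}|+|v_{n+1}|)\le |u_n|+|v_n|$, so $|u_n|+|v_n|\le |u_0|+|v_0|\prod_{k=0}^{n-1}(1+\mu_k)^{-1}$, and again~H\ref{cond-P1}) forces uniform convergence to zero independently of the initial time. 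The same computation, now with forcing $f_n,g_n$, proves (iii): I get $W_{n+1}\le (W_n+|f_n|+|g_n|)/(1+\mu_n)$ with $W_0=0$, and iterating together with~H\ref{cond-P1}) gives the constant $L=\sup_n\sum_{j=0}^{n-1}\prod_{k=j}^{n-1}(1+\mu_k)^{-1}<\infty$, depending only on $(\mu_n)$.

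For the lower bounds in (iv), after establishing the upper bound $M$, I drop the nonnegative $\eta_n y_{n+1}$ term in the first equation to obtain $x_{n+1}\ge (\Lambda_n+x_n)/(1+\mu_n+p_n)$, iterate, and invoke the first half of~H\ref{cond-P2}) together with boundedness of $(\mu_n),(p_n)$ to extract a positive liminf for $x_n$; substituting this into the inequality $y_{n+1}\ge (p_n x_{n+1}+y_n)/(1+\mu_n+\eta_n)$ and using the second half of~H\ref{cond-P2}) yields the positive liminf for $y_n$. Finally, for the periodic case in (v), the Poincar\'e map $P(x_0,y_0)=(x_\omega,y_\omega)$ is affine, and the contraction estimate from (ii) applied over one period shows it is a strict contraction on $(\R_0^+)^2$, producing a unique fixed point whose orbit is the sought $\omega$-periodic solution; its global uniform attractivity is then a restatement of (ii). The main technical obstacle is simply packaging~H\ref{cond-P1}) correctly so that the geometric decay of $\prod(1+\mu_k)^{-1}$ is quantitative and uniform in the starting index, which is what powers all of (ii), (iii), and the contraction in (v).
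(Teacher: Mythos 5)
The paper does not prove this lemma at all --- it is imported verbatim as Lemma~2.2 of~\cite{Zhang-AMC-2015} --- so there is no internal proof to compare against; what you have written is an independent, self-contained argument, and it is essentially correct. Your central device, summing the two equations so that the coupling terms $\eta_n y_{n+1}$ and $p_n x_{n+1}$ cancel to give $(1+\mu_n)(x_{n+1}+y_{n+1})=\Lambda_n+x_n+y_n$, is exactly the right structural observation: it reduces (ii) and (iii) to the scalar recursion controlled by~H\ref{cond-P1}) (equivalently, by the geometric block decay~\eqref{cond-P1-a} that the paper records in the appendix), and your triangle-inequality version $(1+\mu_n)\bigl(|u_{n+1}|+|v_{n+1}|\bigr)\le |u_n|+|v_n|$ does go through because the coefficients $\eta_n,p_n$ of the cross terms are the same ones subtracted off when you sum. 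Two small points deserve explicit care. First, in (ii) the contraction rate is uniform in the starting index only for indices beyond the threshold where~\eqref{cond-P1-a} applies (H\ref{cond-P1}) is a $\limsup$ condition), and the time $T$ to reach an $\eps$-neighbourhood inevitably depends on the initial separation, so ``globally uniformly attractive'' must be read as uniform in $n_0$ on bounded sets of initial data --- which is how the paper uses it. Second, in (v) the one-period contraction factor $\prod_{k=0}^{\omega-1}(1+\mu_k)^{-1}$ need not be $<1$ if $\mu_k$ vanishes on the whole period window; you should pass to a suitable iterate of the Poincar\'e map (H\ref{cond-P1}) guarantees some iterate is a strict contraction), which still yields a unique fixed point, and positivity of the resulting periodic orbit then follows from your part (iv) rather than from the fixed-point argument itself. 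With those caveats made explicit, the proof is complete and, if anything, more elementary than relying on the general comparison machinery cited from Zhang.
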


We have the following lemma
\begin{lemma}\label{lema:system}
Assume that condition ~H\ref{cond-P3}) holds. Then we have the following:
\begin{enumerate}[i)]
\item \label{cond-1-bs} all solutions $(S_n,I_n,R_n,V_n)$ of~\eqref{eq:ProblemaPrincipal}
with nonnegative initial conditions are nonnegative for all $n \in \N_0$;
\item \label{cond-2-bs} all solutions $(S_n,I_n,R_n,V_n)$ of~\eqref{eq:ProblemaPrincipal}
with positive initial conditions are positive for all $n \in \N_0$;
\item \label{cond-3-bs}  there is a constant $M > 0$ such that, if $(S_n,I_n,R_n,V_n)$ is a solution
of~\eqref{eq:ProblemaPrincipal} with nonnegative initial conditions then
\[
 \limsup_{n \to +\infty} S_n +I_n + R_n + V_n < M.
\]
\end{enumerate}
\end{lemma}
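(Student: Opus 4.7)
My plan is to establish (i) and (ii) by induction on $n$ and (iii) by summing the four equations of~\eqref{eq:ProblemaPrincipal}.

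For the inductive step in~(i), I assume $(S_n,I_n,R_n,V_n)\ge 0$ and first need to solve the implicit coupled system
\begin{equation*}
\begin{cases}
(1+\mu_n+p_n)\,S_{n+1}+\beta_n\vphi(S_{n+1},I_n)=S_n+\Lambda_n+\eta_n V_{n+1},\\
(1+\mu_n+\eta_n)\,V_{n+1}+\sigma_n\psi(V_{n+1},I_n)=V_n+p_n S_{n+1},
\end{cases}
\end{equation*}
arising from the first and fourth rows of~\eqref{eq:ProblemaPrincipal}. By H\ref{cond-H2}) both left-hand sides vanish at the origin while both right-hand sides are nonnegative; combined with the continuity provided by H\ref{cond-H1}) and with the at-most-linear growth of $\vphi(\,\cdot\,,I_n)$ and $\psi(\,\cdot\,,I_n)$ coming from H\ref{cond-P3}), the left-hand sides are coercive and a Brouwer/Miranda-type fixed-point argument produces a nonnegative pair $(S_{n+1},V_{n+1})$. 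Substituting these into the remaining two equations gives the explicit updates
\[
I_{n+1}=\frac{I_n+\beta_n\vphi(S_{n+1},I_n)+\sigma_n\psi(V_{n+1},I_n)}{1+\mu_n+\alpha_n+\gamma_n},\qquad R_{n+1}=\frac{R_n+\gamma_n I_{n+1}}{1+\mu_n},
\]
both manifestly nonnegative; this closes~(i). Part~(ii) is handled by the same induction with strict inequalities: the formulas above immediately force $I_{n+1},R_{n+1}>0$ whenever $I_n,R_n>0$, and if $S_{n+1}=0$ then, by H\ref{cond-H2}), the first equation would reduce to $\eta_n V_{n+1}=-(S_n+\Lambda_n)<0$, contradicting $V_{n+1}\ge 0$; the argument ruling out $V_{n+1}=0$ is symmetric.

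For~(iii), I sum the four equations of~\eqref{eq:ProblemaPrincipal}: every interaction, vaccination, recovery and immunity-loss term cancels in pairs, and, setting $N_n:=S_n+I_n+R_n+V_n$ and using $I_{n+1}\ge 0$ from~(i),
\[
(1+\mu_n)\,N_{n+1}=N_n+\Lambda_n-\alpha_n I_{n+1}\le N_n+\Lambda_n.
\]
Iterating this scalar linear inequality across blocks of length $\omega_\mu$, the hypothesis $\limsup_n\prod_{k=n}^{n+\omega_\mu}1/(1+\mu_k)<1$ from H\ref{cond-P1}) yields geometric contraction of the homogeneous part, while boundedness of $(\Lambda_n)$ keeps the inhomogeneous part uniformly bounded; this produces the claimed $M>0$ with $\limsup_{n\to\infty} N_n<M$ independent of the initial data.

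The step I expect to be the main technical obstacle is the very first one, namely proving existence (and uniqueness) of the nonnegative pair $(S_{n+1},V_{n+1})$ for the implicit coupled system, since no monotonicity of $\vphi$ or $\psi$ in the first variable is assumed in H\ref{cond-H1})--H\ref{cond-H2}). Carefully exploiting H\ref{cond-P3}), which bounds $\vphi(x,y)\le y\,\partial_2\vphi(x,0)$ and $\psi(x,y)\le y\,\partial_2\psi(x,0)$, together with the Lipschitz control on $\partial_2\vphi(\,\cdot\,,0)$ and $\partial_2\psi(\,\cdot\,,0)$ supplied by H\ref{cond-H1}), should be what makes the fixed-point argument close and simultaneously give uniqueness; once this is in place the rest of the lemma follows along the routine lines sketched above.
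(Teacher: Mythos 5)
Your proposal is correct and its skeleton matches the paper's: induction for nonnegativity/positivity using H\ref{cond-H2}) together with the linear bounds $\vphi(x,y)\le k_\vphi xy$ and $\psi(x,y)\le k_\psi xy$ that follow from H\ref{cond-H1}), H\ref{cond-H2}) and H\ref{cond-P3}) (these are exactly~\eqref{cond-P3-a1} and~\eqref{cond-P3-a2} in the appendix), and for part iii) the summed inequality $(1+\mu_n)N_{n+1}\le N_n+\Lambda_n$. There are two genuine differences. First, for the inductive step the paper does not invoke any fixed-point theorem: it substitutes the fourth equation into the first and uses the linear bounds to produce an explicit strictly positive lower bound for $S_{n+1}$ (and hence for $V_{n+1}$, $I_{n+1}$, $R_{n+1}$), whereas you prove existence of a nonnegative solution of the implicit step by a Brouwer/Miranda argument and then exclude the value $0$ by contradiction. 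Your route is more careful about well-posedness of the implicit scheme -- an issue the paper silently assumes away -- but note that, as stated, producing \emph{a} nonnegative solution does not by itself show that \emph{every} solution with nonnegative data is nonnegative, which is what the lemma literally asserts; the paper's explicit lower bound has the same a priori flavour, so both arguments sit at the same level of rigour here, but if you keep the fixed-point route you should either prove uniqueness or rephrase the conclusion. Second, for iii) the paper simply cites Lemma 2 of Zhang--Teng--Gao after obtaining $N_{n+1}\le(\Lambda_n+N_n)/(1+\mu_n)$, while you iterate the inequality directly over blocks of length $\omega_\mu$ using H\ref{cond-P1}); this is the same content made self-contained, and you are right that H\ref{cond-P1}) (not only H\ref{cond-P3})) is what actually drives the uniform ultimate bound.
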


\begin{proof}
See Appendix~\ref{Appendix:A}.
\end{proof}

For each $\lambda$ and each particular solution $\xi^{*}_n=(x^{*}_n,y^{*}_n)$ of~\eqref{eq:SistemaAuxiliar} with $x^{*}_0>0$ and $y^{*}_0>0$
we define the numbers
\begin{equation}\label{eq:discrete-threshold-liminf-ell}
 \mR_D^\ell(\xi^{*},\lambda)  = \liminf_{n \to \infty} \prod_{k=n}^{n+\lambda} \frac{1+\beta_k \partial_2 \vphi(x^{*}_{k+1},0)+\sigma_k \partial_2 \psi(y^{*}_{k+1},0)}{1+\mu_k+\alpha_k+\gamma_k}
\end{equation}
and
\begin{equation}\label{eq:discrete-threshold-limsup-u}
 \mR_D^u(\xi^{*},\lambda)  = \limsup_{n \to \infty} \prod_{k=n}^{n+\lambda} \frac{1+\beta_k \partial_2 \vphi(x^{*}_{k+1},0)+\sigma_k \partial_2 \psi(y^{*}_{k+1},0)}{1+\mu_k+\alpha_k+\gamma_k},
\end{equation}
where  $\partial_i f$ denotes the partial derivative of $f$ with respect to the $i$-th variable. Contrarily to what one could expect, the next lemma shows that
the numbers above do not depend on the particular solution $\xi_n=(x_n,y_n)$ of~\eqref{eq:SistemaAuxiliar} with $x_n(0)>0$ and $y_n(0)>0$.
\begin{lemma} \label{lema:indep}
Assume that~H\ref{cond-H1}),~H\ref{cond-P1}) and~H\ref{cond-P2}) hold. If $(\xi_1^*)_n=((x_1)^*_n,(y_1)^*_n)$ and $(\xi_2^*)_n=((x_2)^*_n,(y_2)^*_n)$ are
two solutions of~\eqref{eq:SistemaAuxiliar} with $x_i^*(0)>0$ and
$y_i^*(0)>0$, $i=1,2$, then
\[
\mR_D^\ell(\xi_1^*,\lambda)=\mR_D^\ell(\xi_2^*,\lambda), \quad \text{and} \quad
\mR_D^u(\xi_1^*,\lambda)=\mR_D^u(\xi_2^*,\lambda).
\]
\end{lemma}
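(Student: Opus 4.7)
\medskip

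\noindent\textbf{Proof plan.} The plan is to show that for fixed $\lambda$ the two products differ, termwise, by a quantity that tends to $0$ as $n\to\infty$, so that both the $\liminf$ and the $\limsup$ must agree.

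\emph{Step 1: Close the two solutions.} By Lemma~\ref{lema:auxSystem} item~\ref{cond-2-aux}), every fixed solution of \eqref{eq:SistemaAuxiliar} is globally uniformly attractive; applied to $(\xi_1^*)_n$ regarded as the ``reference'' solution and $(\xi_2^*)_n$ as a perturbation with the same initial time, this yields, for every $\varepsilon>0$, an index $N(\varepsilon)$ such that
\[
|(x_1)^*_n-(x_2)^*_n|+|(y_1)^*_n-(y_2)^*_n|<\varepsilon \qquad \text{for all } n\ge N(\varepsilon).
\]

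\emph{Step 2: Uniform bounds on the factors.} Write $A^{(i)}_k$ for the $k$-th factor in \eqref{eq:discrete-threshold-liminf-ell} corresponding to $(\xi_i^*)_n$. By Lemma~\ref{lema:auxSystem} item~\ref{cond-4-aux}) the sequences $(x_i)^*_n,(y_i)^*_n$ are eventually bounded by constants $0<m<M$; combined with the Lipschitz property in H\ref{cond-H1}), this gives an upper bound $B$ for both $\partial_2\vphi((x_i)^*_{k+1},0)$ and $\partial_2\psi((y_i)^*_{k+1},0)$. Together with the boundedness of $(\beta_k),(\sigma_k),(\mu_k),(\alpha_k),(\gamma_k)$ this produces a constant $A>1$ such that $A^{(1)}_k,A^{(2)}_k\in[A^{-1},A]$ for all $k$ large.

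\emph{Step 3: Factorwise comparison.} For $k\ge N(\varepsilon)$, H\ref{cond-H1}) combined with Step~1 yields
\[
\bigl|\partial_2\vphi((x_1)^*_{k+1},0)-\partial_2\vphi((x_2)^*_{k+1},0)\bigr|\le k_{\vphi}\,\varepsilon,
\]
and analogously for $\psi$. Since the denominators coincide and are bounded below, this gives a constant $C>0$ (independent of $k$) with
\[
\bigl|A^{(1)}_k-A^{(2)}_k\bigr|\le C\varepsilon \qquad \text{for all } k\ge N(\varepsilon).
\]

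\emph{Step 4: Product comparison and conclusion.} Using the telescoping identity
\[
\prod_{k=n}^{n+\lambda}A^{(1)}_k-\prod_{k=n}^{n+\lambda}A^{(2)}_k
=\sum_{j=n}^{n+\lambda}\Bigl(\prod_{k=n}^{j-1}A^{(2)}_k\Bigr)\bigl(A^{(1)}_j-A^{(2)}_j\bigr)\Bigl(\prod_{k=j+1}^{n+\lambda}A^{(1)}_k\Bigr)
\]
together with the bounds of Steps~2 and~3, we obtain, for $n\ge N(\varepsilon)$,
\[
\left|\prod_{k=n}^{n+\lambda}A^{(1)}_k-\prod_{k=n}^{n+\lambda}A^{(2)}_k\right|\le (\lambda+1)\,A^{\lambda}\,C\,\varepsilon.
\]
Taking $\liminf$ and $\limsup$ as $n\to\infty$ and then letting $\varepsilon\to 0$ gives
$\mR_D^\ell(\xi_1^*,\lambda)=\mR_D^\ell(\xi_2^*,\lambda)$ and $\mR_D^u(\xi_1^*,\lambda)=\mR_D^u(\xi_2^*,\lambda)$, which is the assertion.

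\emph{Main obstacle.} The only non-cosmetic point is Step~2: we need uniform bounds on the factors $A^{(i)}_k$ in order to control the telescoping sum. Because $\vphi$ and $\psi$ are only assumed differentiable on $(\R_0^+)^2$ and Lipschitz \emph{in the partial derivatives along the boundary $y=0$}, these bounds are not immediate and require invoking both the eventual boundedness from Lemma~\ref{lema:auxSystem} item~\ref{cond-4-aux}) and the Lipschitz hypothesis in H\ref{cond-H1}); the boundedness of the parameter sequences then closes the estimate.
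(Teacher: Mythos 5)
Your proposal is correct and follows essentially the same route as the paper's proof: both use the global uniform attractivity from Lemma~\ref{lema:auxSystem}~\ref{cond-2-aux}) to make the two solutions $\varepsilon$-close, the Lipschitz hypothesis in H\ref{cond-H1}) together with the eventual boundedness from item~\ref{cond-4-aux}) to get a termwise bound $C\varepsilon$ on the difference of factors, and then bound the difference of the $(\lambda+1)$-fold products by a quantity that vanishes with $\varepsilon$. The only (cosmetic) difference is that you control the product difference via a telescoping identity, whereas the paper expands $\prod(r_k\pm C\varepsilon)$ and collects the error into a polynomial $\Theta_\varepsilon$ in $\varepsilon$; both yield the same conclusion.
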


\begin{proof}
See Appendix~\ref{Appendix:A}.
\end{proof}

By lemma ~\ref{lema:indep} we can drop the dependence of the particular solution and simply write $\mR_D^\ell(\lambda)$ and $\mR_D^u(\lambda)$ instead of $\mR_D^\ell(\xi^*,\lambda)$ an $\mR_D^u(\xi^*,\lambda)$ respectively.

\subsection{Extinction and permanence} \label{subsection:EP}
We have the following result about the extinction of the disease:

\begin{theorem}[Extinction of the disease] \label{teo:Extinction}
Assume that conditions~H\ref{cond-H1}) to~H\ref{cond-P3}) hold. Then
\begin{enumerate}[a)]
\item \label{teo:Extinction-a} If there is a constant $\lambda>0$ such that $\mR_D^u (\lambda) <1$, then the infectives $(I_n)$ go to extinction.
\item \label{teo:Extinction-b} Any solution $(x_n^{*}, 0, 0, y_n^{*} )$, where $(x_n^*,y_n^*)$ is a particular solution of system~\eqref{eq:SistemaAuxiliar}, is globally uniformly attractive.
\end{enumerate}
\end{theorem}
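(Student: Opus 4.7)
The plan is to establish part (a) first by deriving a Gronwall-type geometric decay for $I_n$, and then to obtain part (b) as a perturbation of the auxiliary system using the fact that $I_n\to 0$.

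For part (a), the starting point is to rewrite the $I_{n+1}$ equation in~\eqref{eq:ProblemaPrincipal} as
\[
I_{n+1}=\frac{I_n+\beta_n\vphi(S_{n+1},I_n)+\sigma_n\psi(V_{n+1},I_n)}{1+\mu_n+\alpha_n+\gamma_n}
\]
and use H\ref{cond-P3}) together with $\vphi(x,0)=\psi(x,0)=0$ to conclude $\vphi(S_{n+1},I_n)\le \partial_2\vphi(S_{n+1},0)\,I_n$ (and similarly for $\psi$), since $y\mapsto \vphi(x,y)/y$ is non-increasing and tends to $\partial_2\vphi(x,0)$ as $y\to 0^+$. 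This yields $I_{n+1}\le I_n\,a_n$ where $a_n$ is the factor appearing in~\eqref{eq:discrete-threshold-limsup-u} but with $(x^*_{n+1},y^*_{n+1})$ replaced by $(S_{n+1},V_{n+1})$. The key step is then to bound $a_n$ by the ``threshold factor'' $a^*_n$. For this I would compare $(S_n,V_n)$ with the auxiliary solution $(\hat x_n,\hat y_n)$ of~\eqref{eq:SistemaAuxiliar} with $(\hat x_0,\hat y_0)=(S_0,V_0)$, and prove by induction that $S_n\le \hat x_n$ and $V_n\le \hat y_n$: the differences $(\hat x_{n+1}-S_{n+1},\hat y_{n+1}-V_{n+1})$ satisfy a $2\times 2$ linear system whose coefficient matrix has positive determinant and non-negative inverse, and whose right-hand side is $(\hat x_n-S_n+\beta_n\vphi(S_{n+1},I_n),\,\hat y_n-V_n+\sigma_n\psi(V_{n+1},I_n))$, non-negative by the induction hypothesis.

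Having $S_n\le \hat x_n\le x^*_n+\eps$ and $V_n\le \hat y_n\le y^*_n+\eps$ for $n$ large (by Lemma~\ref{lema:auxSystem}~\ref{cond-2-aux})), H\ref{cond-H1}) (monotonicity and Lipschitz continuity of $\partial_2\vphi(\cdot,0)$ and $\partial_2\psi(\cdot,0)$) gives $a_n\le a^*_n+C\eps$, with $a^*_n$ uniformly bounded by the boundedness of $\beta_n,\sigma_n$ and of $(x^*_n,y^*_n)$ (Lemma~\ref{lema:auxSystem}~\ref{cond-4-aux})). Then $\prod_{k=n}^{n+\lambda}a_k\le \prod_{k=n}^{n+\lambda}(a^*_k+C\eps)$, and since $\mR_D^u(\lambda)<1$ I would choose $\eps$ so small and $N_0$ so large that the right-hand side is bounded by some $\rho<1$ for all $n\ge N_0$. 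Iterating over windows of length $\lambda+1$ yields $I_{n+j(\lambda+1)}\le \rho^j I_n$, hence $I_n\to 0$.

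For part (b), with $I_n\to 0$ from (a), the $R_n$ equation $(1+\mu_n)R_{n+1}=\gamma_n I_{n+1}+R_n$ combined with H\ref{cond-P1}) (which yields a uniform contraction rate $\prod_{k=n}^{n+\omega_\mu}(1+\mu_k)^{-1}<1-\delta$ for $n$ large) forces $R_n\to 0$ via a variation-of-parameters-type summation. For $(S_n,V_n)$, these satisfy the perturbed system~\eqref{eq:SistemaAuxiliar2} with forcings $f_n=-\beta_n\vphi(S_{n+1},I_n)\to 0$ and $g_n=-\sigma_n\psi(V_{n+1},I_n)\to 0$. Applying Lemma~\ref{lema:auxSystem}~\ref{cond-3-aux}) restarted from a late time $N$ where $\sup_{k\ge N}(|f_k|+|g_k|)$ is arbitrarily small, together with global uniform attractivity from~\ref{cond-2-aux}), gives $(S_n,V_n)\to (x^*_n,y^*_n)$. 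Uniformity of all these convergences with respect to $n_0$ and initial data in the absorbing set of Lemma~\ref{lema:system}~\ref{cond-3-bs}) upgrades attractivity to global uniform attractivity.

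The main obstacle I expect lies in part (a): verifying that the contraction constant $\rho$ and the threshold index $N_0$ depend only on $\eps$ and the model data, not on the particular solution. This rests on two delicate points: the monotone comparison $S_n\le \hat x_n$, $V_n\le \hat y_n$ must really hold despite the implicit-in-$n{+}1$ structure of Mickens' scheme (handled by the non-negative inverse of the $2\times 2$ matrix above), and the contraction of the product must survive the $O(\eps)$ perturbation \emph{uniformly} in $n$, which requires the uniform upper/lower bounds on the factors $a^*_n$ from Lemma~\ref{lema:auxSystem}~\ref{cond-4-aux}) and the boundedness of the coefficient sequences.
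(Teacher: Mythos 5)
Your part~a) is essentially the paper's own argument: the same chain H\ref{cond-P3}) $\Rightarrow$ $\vphi(S_{n+1},I_n)/I_n\le\partial_2\vphi(S_{n+1},0)$, the same comparison $S_n\le \hat x_n\le x_n^*+\eps_0$ and $V_n\le\hat y_n\le y_n^*+\eps_0$ via the auxiliary system and Lemma~\ref{lema:auxSystem}~\ref{cond-2-aux}), the same Lipschitz/monotonicity step $\partial_2\vphi(S_{n+1},0)\le\partial_2\vphi(x^*_{n+1},0)+k_\vphi\eps_0$, and the same absorption of the $O(\eps_0)$ perturbation into the product before taking $\limsup$ (the paper bakes the term $(\beta^uk_\vphi+\sigma^uk_\psi)\eps_0$ directly into the threshold inequality). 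Your explicit justification of the comparison principle through the non-negative inverse of the $2\times2$ M-matrix is a detail the paper merely asserts, and your iteration over windows of length $\lambda+1$ is in fact cleaner than the paper's final display, which nominally asserts a per-step contraction. Where you genuinely diverge is part~b): the paper takes two arbitrary solutions of the full system, shows $R_n^{(1)}-R_n^{(2)}\to 0$ from the recursion $\rho_{n+1}<\eps/(1+\mu_n)+\rho_n/(1+\mu_n)$ together with~\eqref{cond-P1-a}, and dismisses the $S$ and $V$ components with ``similar computations''; you instead view $(S_n,V_n)$ as a solution of the perturbed system~\eqref{eq:SistemaAuxiliar2} with forcings $f_n=-\beta_n\vphi(S_{n+1},I_n)\to 0$, $g_n=-\sigma_n\psi(V_{n+1},I_n)\to 0$ and invoke Lemma~\ref{lema:auxSystem}~\ref{cond-3-aux}) restarted at a late time plus~\ref{cond-2-aux}). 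Your route buys a genuinely complete treatment of the coupled $(S,V)$ block, which is the part the paper glosses over; the paper's two-solution comparison buys a uniform statement more directly, since it never needs to restart the perturbation estimate. Both arguments are correct; just make sure, when you ``restart'' Lemma~\ref{lema:auxSystem}~\ref{cond-3-aux}) at time $N$, that the resulting bound is combined with the \emph{uniform} attractivity of~\ref{cond-2-aux}) and the absorbing bound of Lemma~\ref{lema:system}~\ref{cond-3-bs}) exactly as you indicate, so that the final $T$ depends only on $\eps$.
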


\begin{proof}
See Appendix~\ref{Appendix:A}.
\end{proof}

We have the following result about the permanence of the disease:

\begin{theorem}[Permanence of the disease] \label{teo:Permanence}
Assume that conditions~H\ref{cond-H1}) to~H\ref{cond-P3}) hold. If there is a constant $\lambda>0$
such that $\mR_D^\ell(\lambda) >1$ then the infectives $(I_n)$ are permanent in system~\eqref{eq:ProblemaPrincipal}.
\end{theorem}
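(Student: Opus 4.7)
The upper bound $\limsup I_n<M$ is immediate from Lemma~\ref{lema:system}(iii), so the task is a uniform positive lower bound $\liminf I_n>m$. My plan is a two-stage argument: first, weak permanence ($\limsup I_n\geq m_0$ for a universal $m_0>0$) obtained by contradiction from the growth estimate encoded in $\mR_D^\ell(\lambda)>1$; then the standard excursion argument to upgrade this to strong permanence.

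For weak permanence, assume for contradiction that some positive solution satisfies $\limsup I_n<\eps_0$, where $\eps_0$ is to be chosen small. Then $I_n<\eps_0$ for all $n$ large. By H\ref{cond-H2}) and the non-increasing ratio in H\ref{cond-P3}), $\vphi(S_{n+1},I_n)\leq\partial_2\vphi(S_{n+1},0)\,I_n$ and similarly for $\psi$, so the $(S,V)$-subsystem of~\eqref{eq:ProblemaPrincipal} solves the perturbed auxiliary system~\eqref{eq:SistemaAuxiliar2} with $\sup(|f_n|+|g_n|)=O(\eps_0)$. Combining Lemma~\ref{lema:auxSystem}(iii) with the global uniform attractivity of Lemma~\ref{lema:auxSystem}(ii) yields, for any prescribed $\delta>0$ and all $n$ large, $|S_n-x_n^*|+|V_n-y_n^*|<\delta$ for a fixed positive reference solution $(x_n^*,y_n^*)$ of~\eqref{eq:SistemaAuxiliar}. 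The matching lower bound $\vphi(x,y)/y\geq\partial_2\vphi(x,0)-\eta(\eps_0)$, uniform for bounded $x$, follows from the monotonicity in H\ref{cond-P3}) via Dini's theorem together with the Lipschitz substitution $\partial_2\vphi(S_{n+1},0)\mapsto\partial_2\vphi(x_{n+1}^*,0)$ from H\ref{cond-H1}). The infective equation then produces
\[
\frac{I_{n+1}}{I_n}\geq\frac{1+\beta_n\partial_2\vphi(x_{n+1}^*,0)+\sigma_n\partial_2\psi(y_{n+1}^*,0)-\kappa}{1+\mu_n+\alpha_n+\gamma_n},
\]
with $\kappa$ made arbitrarily small by shrinking $\eps_0$ and $\delta$. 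Iterating over a window of length $\lambda+1$ and invoking $\mR_D^\ell(\lambda)>1$ gives $I_{n+\lambda+1}\geq(1+\theta)I_n$ for some fixed $\theta>0$ and all $n$ large, so $I_n\to\infty$, contradicting Lemma~\ref{lema:system}(iii).

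For strong permanence, suppose for contradiction that $\liminf I_n=0$ for some positive solution. Combined with $\limsup I_n\geq m_0$ from weak permanence, $I_n$ performs infinitely many excursions below $m_0/2$. The one-step bound $I_{n+1}\geq cI_n$ with $c=1/(1+\sup_n(\mu_n+\alpha_n+\gamma_n))\in(0,1)$ controls the per-step decay rate. On any such excursion, after a fixed settling time $T_1=T_1(\delta)$ (uniform in the entry state, by the global attractivity in Lemma~\ref{lema:auxSystem}(ii)), the weak-permanence estimate forces $I_{k+\lambda+1}\geq(1+\theta)I_k$ as long as $I$ stays below $m_0/2$, so the excursion length is bounded by a universal constant $T_0$. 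Hence $I_n\geq c^{T_0+1}(m_0/2)$ on every such excursion, contradicting $\liminf I_n=0$.

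The main obstacle is the uniform calibration of $\eps_0,\delta,T_1,T_0$ so that all estimates close up consistently. Two delicate points are (i) upgrading the pointwise limit $\vphi(x,y)/y\to\partial_2\vphi(x,0)$ as $y\to 0^+$ to a uniform-in-bounded-$x$ near-linear lower bound, using monotonicity from H\ref{cond-P3}) and the Lipschitz substitution from H\ref{cond-H1}); and (ii) ensuring $T_1$ depends only on $\delta$ and the model parameters, not on the initial condition, which crucially uses the global (not merely pointwise) uniform attractivity in Lemma~\ref{lema:auxSystem}(ii) together with the perturbation estimate in (iii).
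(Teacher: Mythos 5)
Your proposal is correct and follows essentially the same two-stage strategy as the paper's proof: weak uniform persistence by contradiction, using the perturbed auxiliary system together with Lemma~\ref{lema:auxSystem} and the robustness of $\mR_D^\ell(\lambda)>1$ under a small perturbation $u$ of the numerator, and then the excursion argument driven by the one-step decay bound $I_{n+1}\ge I_n/(1+\mu^u+\alpha^u+\gamma^u)$. The only cosmetic differences are your use of Dini's theorem (in place of the paper's mean-value-theorem plus continuity argument) to get the uniform lower bound on $\vphi(x,y)/y$, and your single-solution formulation of the strong-persistence contradiction versus the paper's sequence of solutions $((S_{n,k},I_{n,k},R_{n,k},V_{n,k}))_k$; both routes yield the same uniform constants.
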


\begin{proof}
See Appendix~\ref{Appendix:A}.
\end{proof}

We consider now the particular periodic case: assume that all parameters of system~\eqref{eq:ProblemaPrincipal} are periodic with period $\omega \in \N$. By~\ref{cond-5-aux}) in Lemma~\ref{lema:auxSystem}, there is an $\omega$-periodic disease-free solution of~\eqref{eq:SistemaAuxiliar}, $\xi^*=(x^*_n,y^*_n)_{n \in \N}$. Thus, in the periodic setting,~\eqref{eq:discrete-threshold-liminf-ell} and~\eqref{eq:discrete-threshold-limsup-u} become both equal to
\begin{equation}\label{eq:discrete-threshold-per}
\mR_D^{per}(\xi^{*}) = \prod_{k=0}^{\omega-1} \frac{1+\beta_k \partial_2 \vphi(x^{*}_{k+1},0)+\sigma_k \partial_2 \psi(y^{*}_{k+1},0)}{1+\mu_k+\alpha_k+\gamma_k}.
\end{equation}
Therefore we obtain the corollary:
\begin{corollary}[Periodic case] \label{teo:periodic}
Assume that all coefficients are $\omega$-periodic in~\eqref{eq:ProblemaPrincipal} and that conditions~H\ref{cond-H1}) to~H\ref{cond-P3})  hold. Then
\begin{enumerate}[a)]
\item \label{teo:Extinction-per-a} If $\mR_D^{per}(\xi^{*})<1$ then the infectives $(I_n)$ go to extinction.
\item \label{teo:Extinction-per-b} The disease-free solution $(x_n^{*}, 0, 0, y_n^{*} )$, where $(x^*_n,y^*_n)_{n \in \N}$ is an disease-free $\omega$-periodic solution of~\eqref{eq:SistemaAuxiliar}, is globally attractive.
\item \label{teo:Permanence-per} If $\mR_D^{per}(\xi^{*})>1$ then the infectives $(I_n)$ are permanent.
\end{enumerate}
\end{corollary}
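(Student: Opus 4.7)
The plan is to derive all three assertions directly from Theorems~\ref{teo:Extinction} and~\ref{teo:Permanence} by exhibiting a specific value of $\lambda$ for which the thresholds $\mR_D^\ell(\lambda)$ and $\mR_D^u(\lambda)$ collapse to the explicit finite product $\mR_D^{per}(\xi^*)$. The entire periodic corollary is then just a matter of verifying that the liminf and limsup in the definitions~\eqref{eq:discrete-threshold-liminf-ell}--\eqref{eq:discrete-threshold-limsup-u} become a constant sequence under periodicity.

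First I would fix the reference solution. Since all coefficients of the auxiliary system~\eqref{eq:SistemaAuxiliar} are $\omega$--periodic, part~\ref{cond-5-aux}) of Lemma~\ref{lema:auxSystem} supplies a unique positive $\omega$--periodic solution $\xi^* = (x^*_n, y^*_n)$, which is globally uniformly attractive. By Lemma~\ref{lema:indep}, the thresholds do not depend on the choice of reference solution, so it is legitimate to evaluate them along this particular periodic $\xi^*$.

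Next I would verify that the factors in~\eqref{eq:discrete-threshold-liminf-ell}--\eqref{eq:discrete-threshold-limsup-u} are periodic and reduce the product to one full period. Let
\[
a_k = \frac{1+\beta_k \partial_2 \vphi(x^{*}_{k+1},0)+\sigma_k \partial_2 \psi(y^{*}_{k+1},0)}{1+\mu_k+\alpha_k+\gamma_k}.
\]
All quantities occurring in $a_k$ are $\omega$--periodic in $k$ (the coefficients by hypothesis; $x^*_{k+1}$, $y^*_{k+1}$ by the choice of $\xi^*$), hence $(a_k)$ is $\omega$--periodic and strictly positive. Taking $\lambda = \omega-1$, the product $\prod_{k=n}^{n+\omega-1} a_k$ is independent of $n$: advancing $n$ by $1$ removes the factor $a_n$ and adjoins $a_{n+\omega}=a_n$. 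Consequently
\[
\prod_{k=n}^{n+\omega-1} a_k \;=\; \prod_{k=0}^{\omega-1} a_k \;=\; \mR_D^{per}(\xi^*),
\]
so both $\mR_D^\ell(\omega-1)$ and $\mR_D^u(\omega-1)$ equal $\mR_D^{per}(\xi^*)$.

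Finally I would invoke the main theorems. For~\ref{teo:Extinction-per-a}), the hypothesis $\mR_D^{per}(\xi^*)<1$ gives $\mR_D^u(\omega-1)<1$, so Theorem~\ref{teo:Extinction}\ref{teo:Extinction-a}) yields extinction of $(I_n)$. Assertion~\ref{teo:Extinction-per-b}) then follows from Theorem~\ref{teo:Extinction}\ref{teo:Extinction-b}): once the infectives go to zero, the disease-free periodic solution $(x_n^*,0,0,y_n^*)$ is globally (uniformly) attractive. Assertion~\ref{teo:Permanence-per}) follows from Theorem~\ref{teo:Permanence} with $\lambda=\omega-1$, since $\mR_D^{per}(\xi^*)>1$ gives $\mR_D^\ell(\omega-1)>1$. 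There is essentially no obstacle beyond this reduction; the only minor edge case to mention is that if $\omega=1$ one should take $\lambda=\omega$ (or any positive multiple of $\omega$ minus one) so that $\lambda>0$ strictly, noting that replacing $\lambda$ by $m\omega-1$ merely replaces $\mR_D^{per}(\xi^*)$ by $\mR_D^{per}(\xi^*)^m$, which preserves its position relative to $1$.
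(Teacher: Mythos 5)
Your proposal is correct and follows essentially the same route as the paper: the text immediately preceding the corollary observes that under $\omega$-periodicity the products in \eqref{eq:discrete-threshold-liminf-ell}--\eqref{eq:discrete-threshold-limsup-u} over a window of one period are independent of $n$, so both thresholds collapse to $\mR_D^{per}(\xi^{*})$, and the corollary is then read off from Theorems~\ref{teo:Extinction} and~\ref{teo:Permanence} with $\lambda=\omega-1$. Your remark about the $\omega=1$ edge case (taking $\lambda=m\omega-1$ and noting the threshold becomes $\mR_D^{per}(\xi^{*})^m$) is a small refinement the paper does not spell out, but the argument is the same.
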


\begin{proof}
See Appendix~\ref{Appendix:A}.
\end{proof}

\section{Consistency} \label{section:consistency}

In this section, under the additional assumption that the parameter functions $\Lambda$, $\mu$, $\eta$ and $p$ are constant, we will get a result stating that when our integral conditions prescribe extinction (respectively persistence) for the continuous-time model, then the discrete-time conditions prescribe extinction (respectively persistence) for the corresponding discrete-time models, as long as the time step is less than some constant.
Throughout this section, we assume that the parameter functions $\Lambda$, $\mu$, $\eta$ and $p$ are constant functions and $\phi(h)$ will be the function used in the discretization of the derivative.

We consider the continuous time model~\eqref{eq:ProblemaPrincipal-cont} and, for a given time step $h$, the corresponding discrete time model, that is the discrete time model with parameters $\beta^h_k=\phi(h)\beta(kh)$, $\sigma^h_k=\phi(h)\sigma(kh)$, $\Lambda^h_k=\phi(h)\Lambda$, $\mu^h_k=\phi(h)\mu$, $p^h_k=\phi(h)p$, $\eta^h_k=\phi(h)\eta$, $\alpha^h_k=\phi(h)\alpha(kh)$ and $\gamma^h_k=\phi(h)\gamma(kh)$.

For a given time step $h>0$, the expressions $\mR_D^\ell(\lambda)$ and $\mR_D^u(\lambda)$ in~\eqref{eq:discrete-threshold-liminf-ell} and~\eqref{eq:discrete-threshold-limsup-u} become, in our context
\[
\mR_D^\ell(\lambda,h)  = \liminf_{n \to \infty} \prod_{k=n}^{n+\lambda} \frac{1+\beta^h_k \vphi(x^{*}_{k+1})+\sigma^h_k \psi(y^{*}_{k+1})}{1+\mu^h_k+\alpha^h_k+\gamma^h_k}
\]
and
\[
\mR_D^u(\lambda,h)  = \limsup_{n \to \infty} \prod_{k=n}^{n+\lambda} \frac{1+\beta^h_k \vphi(x^{*}_{k+1})+\sigma^h_k \psi(y^{*}_{k+1})}{1+\mu^h_k+\alpha^h_k+\gamma^h_k},
\]
where $(x^{*}_k,y^{*}_k)$ is the solution of the (in our context autonomous) system~\eqref{eq:SistemaAuxiliar}.

We have the following result.
\begin{theorem}\label{teo:consitency}
For system~\eqref{eq:ProblemaPrincipal-cont}, assume that $\Lambda(t)=\Lambda$, $\mu(t)=\mu$, $\eta(t)=\eta$ and $p(t)=p$ for all $t\ge 0$ and that the functions $\alpha(t)$, $\gamma(t)$, $\beta(t)$ and $\sigma(t)$ are differentiable, nonnegative, bounded and have bounded derivative. Assume also that conditions~C\ref{cond-C1}) to C\ref{cond-C3}) hold and let
$$h^u_{\text{max}}=-\dfrac{R^u_C(\lambda)}{\sup_{t \ge 0}|f'(t)|(\lambda+1)} \quad \text{and} \quad h^\ell_{\text{max}}=\dfrac{R^\ell_C(\lambda)}{\sup_{t \ge 0}|f'(t)|(\lambda+1)},$$
where
$$f(t)=\beta(t)\vphi\left(\frac{\Lambda(\mu+\eta)}{\mu(\mu+\eta+p)}\right)
+\sigma(t)\psi\left(\frac{p\Lambda}{\mu(\mu+\eta+p)}\right)-\mu-\alpha(t)-\gamma(t).$$ Then:
\begin{enumerate}[a)]
  \item \label{teo:consitency-a} If $\mR_C^u(\lambda)<0$ then $\mR_D^u(\lfloor\lambda/h\rfloor,h)<1$ for all $h \in ]0,h^u_{\text{max}}[$;
  \item \label{teo:consitency-b} If $\mR_C^\ell(\lambda)>0$ then $\mR_D^\ell(\lfloor\lambda/h\rfloor,h)>1$ for all for all $h \in ]0,h^\ell_{\text{max}}[$.
\end{enumerate}
\end{theorem}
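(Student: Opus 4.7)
I prove part~(\ref{teo:consitency-a}); part~(\ref{teo:consitency-b}) is obtained symmetrically, with $\liminf$ in place of $\limsup$ and the dual Taylor estimate on $\log$. Since $\Lambda,\mu,\eta,p$ are constant, the auxiliary system~\eqref{eq:SistemaAuxiliar} is autonomous, and solving the fixed-point equations yields the unique positive equilibrium
$$(\bar x,\bar y)=\prts{\dfrac{\Lambda(\mu+\eta)}{\mu(\mu+\eta+p)},\dfrac{p\Lambda}{\mu(\mu+\eta+p)}},$$
globally uniformly attractive by Lemma~\ref{lema:auxSystem}(\ref{cond-2-aux}). Because Lemma~\ref{lema:indep} makes $\mR_D^u$ independent of the particular positive solution of~\eqref{eq:SistemaAuxiliar}, I evaluate it at $(x^*_k,y^*_k)\equiv(\bar x,\bar y)$. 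Writing $g_+(t)=\beta(t)\vphi(\bar x)+\sigma(t)\psi(\bar y)$, $g_-(t)=\mu+\alpha(t)+\gamma(t)$, $f=g_+-g_-$, and $N_h=\lfloor\lambda/h\rfloor$, this gives
$$\mR_D^u(\lfloor\lambda/h\rfloor,h) = \limsup_{n\to\infty}\prod_{k=n}^{n+N_h}\dfrac{1+\phi(h)g_+(kh)}{1+\phi(h)g_-(kh)},$$
and the goal becomes showing that its logarithm is strictly negative; note that $f$ here is exactly the integrand in $R_C^u(\lambda)$.

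After taking logarithms I use the Taylor estimate $|\log(1+x)-x|\le x^2$ (valid for $\phi(h)g_\pm(kh)\le 1/2$, which holds for all small enough $h$) to reduce each factor to $\phi(h)f(kh)+O(\phi(h)^2)$. The sum of the Taylor remainders over the $N_h+1$ factors is $O(\phi(h)^2(\lambda/h+1))$, which is $o(h)$ relative to the leading sum once $\phi(h)/h$ stays bounded. The leading sum $\sum_{k=n}^{n+N_h}\phi(h)f(kh)$ is a rectangle-rule approximation to $\int_{nh}^{nh+\lambda}f(s)\,ds$, and since $f\in C^1$ with bounded derivative the standard quadrature bound
$$\Bigl|\sum_{k=n}^{n+N_h}\phi(h)f(kh)-\int_{nh}^{nh+\lambda}f(s)\,ds\Bigr|\le \sup_{t\ge 0}|f'(t)|\,(\lambda+1)\,h$$
holds, the factor $(\lambda+1)$ coming from the $N_h+1\le\lambda/h+1$ subintervals of length at most $h$, and $\sup|f'|h$ bounding the oscillation of $f$ on each.

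Taking $\limsup_n$ on both sides (the uniform continuity of $f$ identifies $\limsup_n\int_{nh}^{nh+\lambda}f$ with $R_C^u(\lambda)$), everything assembles into
$$\log\mR_D^u(\lfloor\lambda/h\rfloor,h)\le R_C^u(\lambda)+\sup_{t\ge 0}|f'(t)|\,(\lambda+1)\,h+o(1) \quad\text{as }h\to 0^+,$$
so whenever $h<h^u_{\max}=-R_C^u(\lambda)/[\sup|f'|(\lambda+1)]$ the right-hand side is strictly negative, yielding $\mR_D^u<1$. The main obstacle is the coexistence of two error sources --- the $O(\phi(h)^2)$ Taylor remainder and the $O(\sup|f'|(\lambda+1)h)$ Riemann-sum error --- and the need to absorb the former into the latter so only the clean threshold $h^u_{\max}$ survives; this absorption is automatic under the standard Mickens assumption $\phi(h)/h\to 1$, and the analogous argument for part~(\ref{teo:consitency-b}) uses $\log(1+x)\ge x-x^2/2$ on the denominator and $\log(1+x)\le x$ on the numerator to reverse the sign of the inequality.
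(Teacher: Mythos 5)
Your proposal is correct in outline and rests on the same two pillars as the paper's argument --- evaluate the thresholds at the constant equilibrium $(\bar x,\bar y)$ of the now-autonomous auxiliary system, linearize the product into a sum, and compare that sum with $\int_{nh}^{nh+\lambda}f$ via the quadrature bound $\sup_{t\ge0}|f'(t)|(\lambda+1)h$ --- but the execution is genuinely different. The paper never takes logarithms: it argues by contradiction along a sequence $h_m\to0$, rewrites the product inequality as a sum inequality plus explicit correction terms $A_{n,\lambda,h}$, $B_{n,\lambda,h}$, $C_{n,\lambda,h}$ (the higher-order binomial terms of the expanded products), and shows each is $O(h_m)$ uniformly in $n$ before invoking the Riemann-sum estimate. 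Your route --- $\log$ of each factor plus the elementary bound $|\log(1+x)-x|\le x^2$ --- reaches the same linearization more cleanly and, unlike the paper's contradiction scheme, produces a direct one-sided inequality $\log\mR_D^u\le R_C^u(\lambda)+\sup|f'|(\lambda+1)h+o(1)$ from which the threshold $h^u_{\max}$ is read off transparently. What the paper's version buys is that it avoids any appeal to $\log$ and keeps every error term explicit; what yours buys is brevity and a visibly symmetric treatment of part (b).

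Two caveats, both of which you flag and both of which the paper shares rather than resolves. First, your leading sum is $\sum_k\phi(h)f(kh)$, which is a rectangle rule for $\tfrac{\phi(h)}{h}\int f$ rather than for $\int f$; identifying the two requires $\phi(h)/h\to1$ (or at least $\phi(h)\asymp h$), which is stronger than the paper's stated hypothesis $\phi(h)\to0$, though it holds for every denominator function the paper actually uses. Second, because of the residual $o(1)$ (the Taylor remainders, the $\phi(h)/h$ correction, and the endpoint term of length $\lambda-\lfloor\lambda/h\rfloor h$), what your inequality literally delivers is the conclusion for all sufficiently small $h$, not for the full explicit interval $]0,h^u_{\max}[$; the paper's contradiction argument has exactly the same character. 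Neither point undermines the substance of your proof relative to the paper's.
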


\begin{proof}
Observe that $(x_n,y_n)=(a,b)$, $n \in \N$ and $(x(t),y(t))=(a,b)$, $t \in \R$, where
$$(a,b)=\left(\Lambda(\mu+\eta)/[\mu(\mu+\eta+p)],p\Lambda/[\mu(\mu+\eta+p)]\right),$$
are respectively solutions of system~\eqref{eq:SistemaAuxiliar} and system~\eqref{eq:SistemaAuxiliar-Pereira-Silva-Silva}. Thus
\[
\mR_C^u(\lambda) = \limsup_{t \to \infty} \int_t^{t+\lambda} \beta(s) \vphi(a)+\sigma(s) \psi(b)-[\mu(s)+\alpha(s)+\gamma(s)] \ds
\]
and
\[
\mR_D^u(\lfloor\lambda/h\rfloor,h)  = \limsup_{n \to \infty} \prod_{k=n}^{n+\lfloor\lambda/h\rfloor} \frac{1+\beta^h_k \vphi(a)+\sigma^h_k \psi(b)}{1+\mu^h_k+\alpha^h_k+\gamma^h_k}.
\]

By contradiction, assume that
\begin{equation}\label{eq:contrad-mRCu<0}
\mR_C^u(\lambda)<0,
\end{equation}
and that there is a sequence $(h_m)_{m \in \N}$ such that
$h_m \to 0$ as $m \to +\infty$ and
\begin{equation}\label{eq:contrad}
\mR_D^u(\lfloor\lambda/h_m\rfloor,h_m)  =\limsup_{n \to \infty} \prod_{k=n}^{n+\lfloor\lambda/h_m\rfloor} \frac{1+\beta^{h_m}_k \vphi(a)+\sigma^{h_m}_k \psi(b)}{1+\mu^{h_m}_k+\alpha^{h_m}_k+\gamma^{h_m}_k}\ge 1,
\end{equation}
for all $m \in \N$. By~\eqref{eq:contrad}, we conclude that, for each $m \in \N$, there are sequences $(h_m)_{m \in \N}$ and $(n_{m,r})_{r \in \N}$  such that
$h_m \to 0$ as $m \to +\infty$, $n_{m,r} \to +\infty$ as $r \to +\infty$ and
\begin{equation}\label{eq:contrad-a}
\begin{split}
& \prod_{k=n_{m,r}}^{n_{m,r}+\lfloor\lambda/h_m\rfloor}(1+\beta^{h_m}_k \vphi(a)+\sigma^{h_m}_k \psi(b))\\
& > (1-h_m) \prod_{k=n_{m,r}}^{n_{m,r}+\lfloor\lambda/h_m\rfloor}(1+\mu^{h_m}_k+\alpha^{h_m}_k+\gamma^{h_m}_k).
\end{split}
\end{equation}
By~\eqref{eq:contrad-a}, we have
\begin{equation}\label{eq:contrad-c}
\begin{split}
& \sum_{k=n_{m,r}}^{n_{m,r}+\lfloor\lambda/h_m\rfloor}(\beta^{h_m}_k \vphi(a)+\sigma^{h_m}_k \psi(b)-\mu^{h_m}_k-\alpha^{h_m}_k-\gamma^{h_m}_k) \\
& >( B_{n_{m,r},\lambda,h_m}-A_{n_{m,r},\lambda,h_m}-C_{n_{m,r},\lambda,h_m})/h_m,
\end{split}
\end{equation}
where

\begin{equation}\label{eq:An-lambda-h}
A_{n,L,h}:=-h+h\prod_{k=n}^{n+\lfloor L/h\rfloor}(1+\beta^h_k \vphi(a)+\sigma^h_k \psi(b))
-h\sum_{k=n}^{n+\lfloor L/h\rfloor}(\beta^h_k \vphi(a)+\sigma^h_k \psi(b)),
\end{equation}
\begin{equation}\label{eq:Bn-lambda-h}
B_{n,L,h}:=-h+h\prod_{k=n}^{n+\lfloor L/h\rfloor}(1+\mu^h_k+\alpha^h_k+\gamma^h_k)
-h\sum_{k=n}^{n+\lfloor L/h\rfloor}(\mu^h_k+\alpha^h_k+\gamma^h_k)
\end{equation}
and
\begin{equation}\label{eq:Cn-lambda-h}
C_{n,L,h}=
h \prod_{k=n}^{n+\lfloor L/h\rfloor}(1+\mu^{h}_k+\alpha^{h}_k+\gamma^{h}_k).
\end{equation}
and, multiplying both sides by $h_m$, we get
\Small{
\begin{equation}\label{eq:contrad-d}
\begin{split}
& \phi(h_m)\sum_{k=n_{m,r}}^{n_{m,r}+\lfloor\lambda/h_m\rfloor}h_m\left[\beta(kh_m) \vphi(a)+\sigma(kh_m) \psi(b)-\mu(kh_m)-\alpha(kh_m)-\gamma(kh_m)\right]\\
& > B_{n_{m,r},\lambda,h_m}-A_{n_{m,r},\lambda,h_m}-C_{n_{m,r},\lambda,h_m}.
\end{split}
\end{equation}
}

We also have
\begin{equation}\label{eq:maj-A-nlh-prev}
\begin{split}
|A_{n_{m,r},\lambda,h_m}|
& \le h_m \sum_{k=2}^{\lfloor\lambda/h_m\rfloor}  \binom{\lfloor\lambda/h_m\rfloor}{k}[(\beta^u \vphi(a)+\sigma^u \psi(b))]^k[\phi(h_m)]^k\\
& \le h_m \sum_{k=0}^{\lfloor\lambda/h_m\rfloor}  \binom{\lfloor\lambda/h_m\rfloor}{k}[(\beta^u \vphi(a)+\sigma^u \psi(b))]^k[\phi(h_m)]^k\\
& = h_m \left[1+(\beta^u \vphi(a)+\sigma^u \psi(b))\phi(h_m)\right]^{\lfloor\lambda/h_m\rfloor}.
\end{split}
\end{equation}
Noting that, by~\eqref{eq:derivative-denominator}, we have
\[
\begin{split}
& \lim_{m \to +\infty} \left[1+(\beta^u \vphi(a)+\sigma^u \psi(b))\phi(h_m)\right]^{\lfloor\lambda/h_m\rfloor}\\
& = \lim_{m \to +\infty} \left[\left(1+\frac{\beta^u \vphi(a)+\sigma^u \psi(b)}{1/\phi(h_m)}\right)^{1/\phi(h_m)}\right]^{\phi(h_m)\lfloor\lambda/h_m\rfloor}\\
& = \e^{(\beta^u \vphi(a)+\sigma^u \psi(b))\lambda}\\
\end{split}
\]
and that a convergent sequence is bounded, by~\eqref{eq:maj-A-nlh-prev} there is $C_1>0$ such that
\begin{equation}\label{eq:maj-A-nlh}
|A_{n_{m,r},\lambda,h_m}|
\le C_1 h_m.
\end{equation}
Similarly, we have
\begin{equation}\label{eq:maj-B-nlh-prev}
\begin{split}
|B_{n_{m,r},\lambda,h_m}|
& \le \sum_{k=2}^{\lfloor\lambda/h_m\rfloor}  \binom{\lfloor\lambda/h_m\rfloor}{k}[(\mu^u+\alpha^u+\gamma^u)]^k[\phi(h_m)]^k\\
& \le \sum_{k=0}^{\lfloor\lambda/h_m\rfloor}  \binom{\lfloor\lambda/h_m\rfloor}{k}[(\mu^u+\alpha^u+\gamma^u)]^k[\phi(h_m)]^k\\
& = h_m \left[1+(\mu^u+\alpha^u+\gamma^u)\phi(h_m)\right]^{\lfloor\lambda/h_m\rfloor}.
\end{split}
\end{equation}
Using~\eqref{eq:derivative-denominator} again, we get
\[
\lim_{m \to +\infty} \left[1+(\mu^u+\alpha^u+\gamma^u)\phi(h_m)\right]^{\lfloor\lambda/h_m\rfloor}
\le \e^{(\mu^u+\alpha^u+\gamma^u)\lambda},
\]
there is $C_2>0$ such that
\begin{equation}\label{eq:maj-B-nlh}
|B_{n_{m,r},\lambda,h_m}| \le C_2 h_m.
\end{equation}
Finally, we have
\[
\begin{split}
|C_{n_{m,r},\lambda,h_m}|
& = h_m \prod_{k=n_{m,r}}^{n_{m,r}+\lfloor \lambda/h_m\rfloor}(1+\mu^{h_m}_k+\alpha^{h_m}_k+\gamma^{h_m}_k)\\
& = h_m (1+3\phi(h_m)\max\{\mu^u,\alpha^u,\gamma^u\})^{\lfloor \lambda/h_m\rfloor+1}.
\end{split}
\]
According~\eqref{eq:derivative-denominator}, we obtain
\[
\begin{split}
& \lim_{m \to +\infty} (1+3\phi(h_m)\max\{\mu^u,\alpha^u,\gamma^u\})^{\lfloor \lambda/h_m\rfloor+1}\\
& =\lim_{m \to +\infty} \left[\left(1+\frac{3\max\{\mu^u,\alpha^u,\gamma^u\}}{1/\phi(h_m)}\right)^{1/\phi(h_m)}\right]^{\phi(h_m)(\lfloor \lambda/h_m\rfloor+1)}\\
& = \e^{3\max\{\mu^u,\alpha^u,\gamma^u\}\lambda},
\end{split}
\]
there is $C_3>0$ such that
\begin{equation}\label{eq:maj-C-nlh}
|C_{n_{m,r},\lambda,h_m}| \le C_3 h_m.
\end{equation}

Thus
\begin{equation}\label{eq:to-zero-uniformly}
\begin{split}
& B_{n_{m,r},\lambda,h_m}-A_{n_{m,r},\lambda,h_m}-C_{n_{m,r},\lambda,h_m}\\
& \le |A_{n_{m,r},\lambda,h_m}|+|B_{n_{m,r},\lambda,h_m}|+|C_{n_{m,r},\lambda,h_m}|\\
& \le (C_1+C_2+C_3)h_m,
\end{split}
\end{equation}
for all $m \ge M$. Since the right hand side of~\eqref{eq:to-zero-uniformly} is independent of $n_{m,r}$, we conclude that
\begin{equation}\label{eq:(An-Bn)/phi-to-0}
B_{n_{m,r},\lambda,h_m}-A_{n_{m,r},\lambda,h_m}-C_{n_{m,r},\lambda,h_m} \to 0,
\end{equation}
as $m \to +\infty$, uniformly in $r$.

On the other hand we note that the $C^1$ function $f:\R^+_0 \to \R$ given by
\begin{equation}\label{eq:funct-integ}
f(t)=\beta(t) \vphi(a)+\sigma(t) \psi(b)-\mu(t)-\alpha(t)-\gamma(t)
\end{equation}
is Riemann-integrable on any bounded interval $I \subset \R_0^+$.

We have that
\[
\begin{split}
\sum_{k=n_{m,r}}^{n_{m,r}+\lfloor\lambda/h_m\rfloor} h_mf(kh_m)
+(\lambda-\lfloor\lambda/h_m\rfloor h_m)f(n_{m,r} h_m+\lfloor\lambda/h_m\rfloor h_m),
\end{split}
\]
is a Riemann sum of
$$\int_{n_{m,r} h_m}^{n_{m,r} h_m+\lambda} \beta(s) \vphi(a)+\sigma(s) \psi(b)-[\mu(s)+\alpha(s)+\gamma(s)] \ds$$
with respect to the partition
$$\{n_{m,r}h_m,n_{m,r}h_m+h_m,\ldots,n_{m,r}h_m+\lfloor\lambda/h_m\rfloor h_m,n_{m,r}h_m+\lambda\}$$
of size $h_m$ of the interval $[n_{m,r},n_{m,r}+\lambda]$. Note that
\[
s_{m,r}:=(\lambda-\lfloor\lambda/h_m\rfloor h_m)f(n_{m,r}h_m+\lfloor\lambda/h_m\rfloor h_m) \le h_mf^u :=s_m
\]
and $s_m \to 0$ as $m \to +\infty$, uniformly in $r$.

Since $f$ is $C^1$ with bounded derivative, for any $h>0$  we have
    $$|f(x)-f(x+h)|\le Ch,$$
where $\displaystyle C=\sup_{t\ge 0} |f'(t)|$. We conclude that
    \begin{equation}\label{eq:sum-int-to-0}
    \begin{split}
    &\left|\sum_{k=n_{m,r}}^{n_{m,r}+\lfloor\lambda/h_m\rfloor} h_mf(kh_m)+s_{m,r} - \int_{n_{m,r} h_m}^{n_{m,r} h_m+\lambda} f(s)\ds\right|\\
    & < Ch_m^2\lfloor\lambda/h_m \rfloor+Ch_m^2\\
    & < C(\lambda+1)h_m,
    \end{split}
    \end{equation}
    thus
    \[
    \sum_{k=n_{m,r}}^{n_{m,r}+\lfloor\lambda/h_m\rfloor} h_mf(kh_m)<\int_{n_{m,r} h_m}^{n_{m,r} h_m+\lambda} f(s)\ds-s_{m,r}+C(\lambda+1)h_m,
    \]
    and therefore
    \Small{
    \begin{equation}\label{eq:aprox-int-sum}
    \sum_{k=n_{m,r}}^{n_{m,r}+\lfloor\lambda/h_m\rfloor} \phi(h_m)h_mf(kh_m)<\phi(h_m)\left[\int_{n_{m,r} h_m}^{n_{m,r} h_m+\lambda} f(s)\ds+C(\lambda+1)h_m\right].
    \end{equation}
    }
By~\eqref{eq:aprox-int-sum} we conclude that, given $\delta >0$, there is $r_m \in \N$ such that, for all $r\ge r_m$,
\begin{equation}\label{eq:contradiction}
\phi(h_m) \sum_{k=n_{m,r_m}}^{n_{m,r_m}+\lfloor\lambda/h_m\rfloor} h_mf(kh_m)<\phi(h_m)\left[\mR_C^u(\lambda)+\delta+C(\lambda+1)h_m\right].
\end{equation}

Finally, recalling that $\mR_C^u(\lambda)<0$, by assumption, by the arbitrariness of $\delta>0$ and the fact that $h_m \to 0$ as $m \to +\infty$, we obtain for sufficiently large $m \in \N$,
$$0 \le \phi(h_m) \sum_{k=n_{m,r_m}}^{n_{m,r_m}+\lfloor\lambda/h_m\rfloor} h_mf(kh_m)<0,$$
which is a contradiction. We obtain~\ref{teo:consitency-a}).

A similar argument allow us to prove~\ref{teo:consitency-b}). In fact, assuming by contradiction that
\begin{equation}\label{eq:contrad-mRCu>0}
\mR_C^\ell(\lambda)>0,
\end{equation}
and that there is a sequence $(h_m)_{m \in \N}$ such that
$h_m \to 0$ as $m \to +\infty$ and
\[
\mR_D^\ell(\lfloor\lambda/h_m\rfloor,h_m)  =\liminf_{n \to \infty} \prod_{k=n}^{n+\lfloor\lambda/h_m\rfloor} \frac{1+\beta^{h_m}_k \vphi(a)+\sigma^{h_m}_k \psi(b)}{1+\mu^{h_m}_k+\alpha^{h_m}_k+\gamma^{h_m}_k} \le 1,
\]
it is possible to conclude that
\begin{equation}\label{eq:contrad-d--per}
\begin{split}
& \phi(h_m)\sum_{k=n_{m,r}}^{n_{m,r}+\lfloor\lambda/h_m\rfloor}h_m(\beta(kh_m) \vphi(a)+\sigma(kh_m) \psi(b)-\mu(kh_m)-\alpha(kh_m)-\gamma(kh_m))\\
& < B_{n_{m,r},\lambda,h_m}-A_{n_{m,r},\lambda,h_m}+C_{n_{m,r},\lambda,h_m},
\end{split}
\end{equation}
where $A_{n_{m,r},\lambda,h_m}$, $B_{n_{m,r},\lambda,h_m}$ and $C_{n_{m,r},\lambda,h_m}$ are given respectively by~\eqref{eq:An-lambda-h},~\eqref{eq:Bn-lambda-h} and~\eqref{eq:maj-C-nlh} and still satisfy~\eqref{eq:maj-A-nlh},~\eqref{eq:maj-B-nlh} and~\eqref{eq:maj-C-nlh}. Consequently, given $\delta >0$, there is $r_m \in \N$ such that, for all $r\ge r_m$,
    \begin{equation}\label{eq:aprox-int-sum-per}
    \phi(h_m)\sum_{k=n_{m,r}}^{n_{m,r}+\lfloor\lambda/h_m\rfloor} h_mf(kh_m)>\phi(h_m)\left[\int_{n_{m,r} h_m}^{n_{m,r}h_m+\lambda} f(s)\ds-\delta+C(\lambda+1)h_m\right].
    \end{equation}
Recalling that $\mR_C^\ell(\lambda)>0$, by assumption and since $\delta>0$ is arbitrary, we obtain for sufficiently large $m \in \N$,
$$0 \ge \phi(h_m)\sum_{k=n_{m,r_m}}^{n_{m,r_m}+\lfloor\lambda/h_m\rfloor} f(kh_m)>0,$$
which is a contradiction. We obtain~\ref{teo:consitency-b}) and the theorem follows.
\end{proof}

Next, for each $L \in \N$, we give an example of a periodic system of period $1$ such that the continuous and the discrete time system with time step $h=1/L$ are not consistent, namely we will have persistence for the continuous time model and extinction for the discrete time model with time step $h=1/L$.

\begin{example}
Let $L \in \N$. Consider in system~\eqref{eq:ProblemaPrincipal-cont} that $\phi(x)=\psi(x)=x$, that, with the exception of $\sigma$ and $\beta$, all parameters are constant, that $\Lambda=\mu$ and that
    $$\sigma(t)=\beta(t)=d[1+c\sin^2(2\pi L t)(1+\cos(2\pi t))].$$
We obtain a periodic system of period $1$.

In this context, $(x_n,y_n)=(a,b)$, $n \in \N$, and $(x(t),y(t))=(a,b)$, $t \in \R$, where
$$(a,b)=\left((\mu+\eta)/(\mu+\eta+p),p/\mu+\eta+p)\right),$$
are respectively solutions of system~\eqref{eq:SistemaAuxiliar} and system~\eqref{eq:SistemaAuxiliar-Pereira-Silva-Silva}.
It is now possible to compute the number $\mathcal R_C^\ell(1)$. In fact, noting that $x^*(t)+y^*(t)=1$, we get
\[
\begin{split}
\mathcal R_C^\ell(1)
& =\int_0^1 \beta(s)x^*(s)+\sigma(s)y^*(s)-\mu-\alpha-\gamma \ds \\
& =\int_0^1 d[1+c\sin^2(2\pi L t)(1+\cos(2\pi t))] \ds -\mu-\alpha-\gamma\\
& =d(1+c/2)-\mu-\alpha-\gamma.
\end{split}
\]
We can also compute $\mathcal R_D^\ell(1,1/L)$. Namely we have
$$\mathcal R_D^\ell(1,1/L)=\frac{1+d/L}{1+\mu+\alpha+\gamma}.$$

If we let $d$ be sufficiently small so that $d<(\mu+\alpha+\gamma)L$, or in other words, $d<(\mu+\alpha+\gamma)$ and $c$ be sufficiently large so that $c>\frac{2}{d}(\mu+\gamma+\alpha-d)$, we obtain

$$\mathcal R_C^\ell(1)>1 \quad \Leftrightarrow \quad \frac{1+d(1+c/2)}{1+\mu+\alpha+\gamma}>1$$
and
$$\mathcal R_D^\ell(1,1/L)<1 \quad \Leftrightarrow \quad \frac{1+d/L}{1+\mu+\alpha+\gamma}<1.$$
so we conclude that we don't have consistency for  time step $1/L$.

Let $L=6$ and consider the continuous model with the following parameters $\mu=\Lambda=0.25$, $\gamma=0.3$, $\alpha=0.05$, $\eta=0.05$, $p=2/3$, $d=0.6$ and $c=1.5$. In figure~\ref{fig-exemplo}, we plot function $\beta$ (or similarly $\sigma$) and the component $I(t)$ of the solution of system~\eqref{eq:ProblemaPrincipal-cont} given by the solver of Mathematica$^\circledR$ (that we take to represent the solution of the continuous-time model) and the solution of the discrete-time model~\eqref{eq:ProblemaPrincipal-disc} with time step $1/6$. As can be seen, the infectives are persistent in the continuous-time model but go to extinction in the discrete-time model. We have inconsistency in this case.
\begin{figure}[h]
  \begin{minipage}[b][2.7cm]{.4\linewidth}
    \includegraphics[scale=0.4]{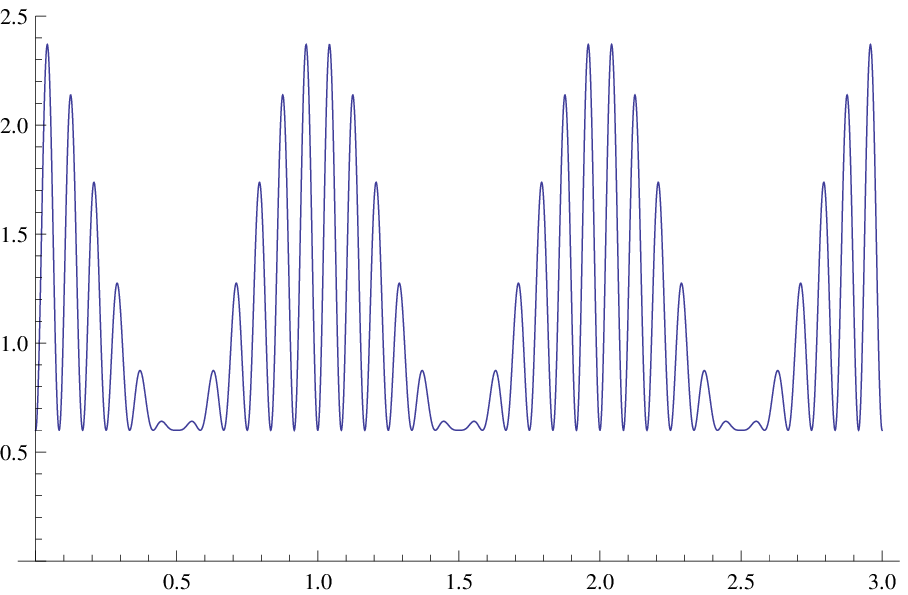}
  \end{minipage}
  \begin{minipage}[b][2.7cm]{.2\linewidth}
    \includegraphics[scale=0.4]{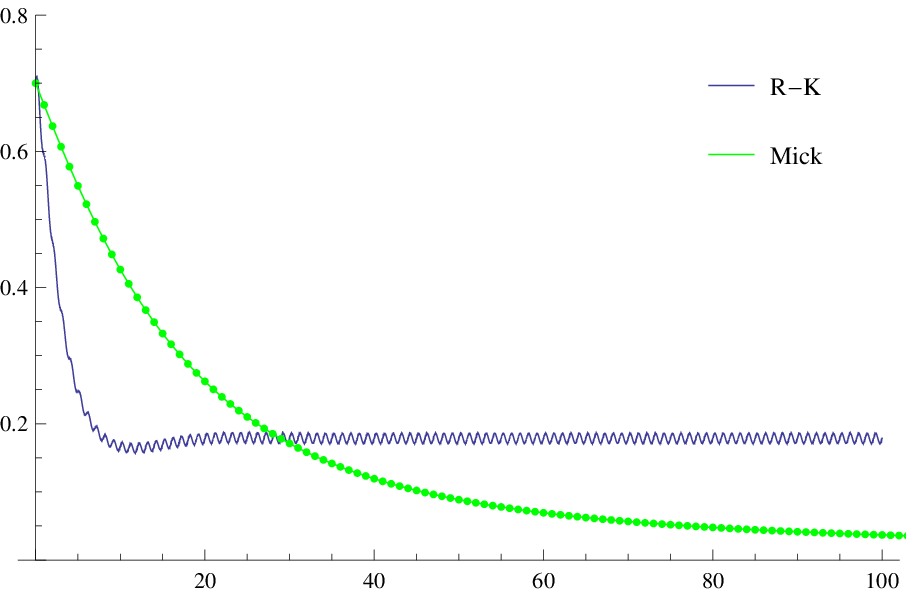}
  \end{minipage}
\caption{Left: function $\beta$; right: inconsistency (time-step=1/6).}
\label{fig-exemplo}
\end{figure}
\end{example}

Note that, changing $\beta(t)$ and $\sigma(t)$ slightly, we can construct an example of a periodic system with period $1$  where the infectives in the continuous time model goes to extinction but, in the discrete time model with time step $h=1/L$, the infectives are persistent.

Furthermore, we emphasize that this lack of consistency is not a result of the discretization method used but simply a result of the fact that the time steps lead to a situation where the points $n/L$ where the functions $\beta$ and $\sigma$ are evaluated (in order to obtain the discrete time parameters) correspond to minimums of $\beta$ and $\sigma$.
\section{Simulation} \label{section:simulation}
Our objective in this section is twofold. On the one hand, we want to consider different incidence functions $\varphi$, corresponding to different discretizations of our continuous model, and compare the several discrete models obtained. We do this in the first subsection. On the other hand, we want to use our model to describe a real situation. We do this in the second subsection where we consider data from the incidence of measles in France in the period 2012-2016.

\subsection{Simulation with several NSFD schemes} \label{subsection:simulation1}
In this subsection we do some simulation to illustrate our results. To begin, we compare our model~\eqref{eq:ProblemaPrincipal} with mass action incidence ($\vphi(S,I)=SI$ and $\psi(V,I)=VI$) with Zhang's model~\cite{Zhang-AMC-2015}. We use the following set of parameters: $\phi(h)=h+0.2 h^2$,
$\Lambda=0.5$, $\mu(t)=\gamma(t)=\delta(t)=0.3$, $\alpha(t)=0.05$, $\eta=0.05$, $p=2/3$ and
    $$\beta(t)=\sigma(t)=b(1+0.3\cos(t\pi/2)).$$
Setting $b=0.3$ we obtain $\mathcal R^u_C(4)=-0.6<0$ and thus we conclude that we have extinction for the continuous model. Taking time-steps equal to $4$, $1$ and $0.5$, we get $\mathcal R_D^u(0,4)=\mathcal R_D^\ell(0,4)=1$, $\mathcal R_D^u(3,1)=0.644<1$ and $\mathcal R_D(7,0.5)=0.601<1$ and we conclude that we have extinction for time steps $1$ and $0.5$. For these parameters, we have consistency in the sense of Theorem~\ref{teo:consitency} as long as the time step is less than $0.05$. Clearly, there is numerical evidence that there is consistency even for higher time steps.
Figure~\ref{fig-compare-Mick-MickZhang-ext} illustrates this situation.
\begin{figure}[h]
  \begin{minipage}[b][2.7cm]{.3\linewidth}
    \includegraphics[scale=0.4]{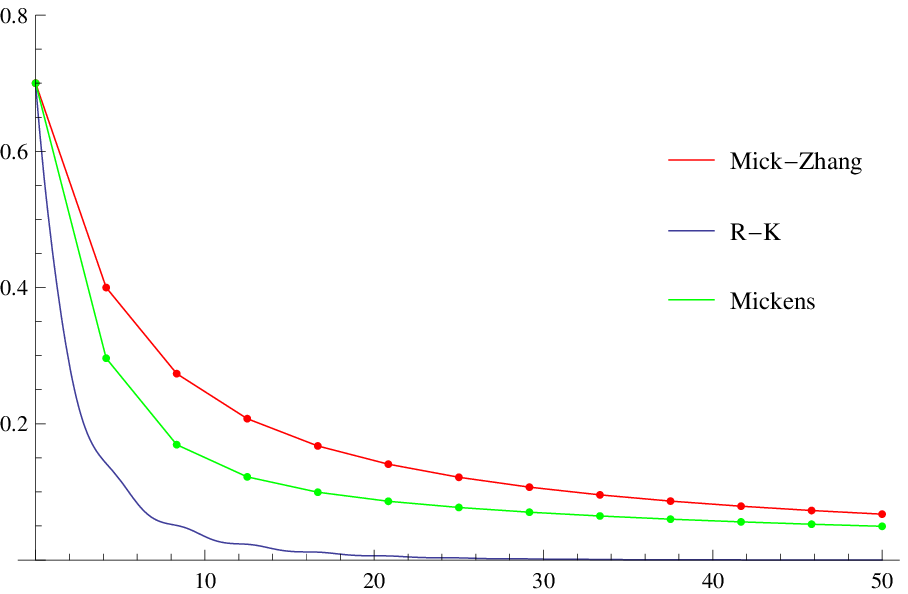}
  \end{minipage}
  \begin{minipage}[b][2.7cm]{.3\linewidth}
        \includegraphics[scale=0.4]{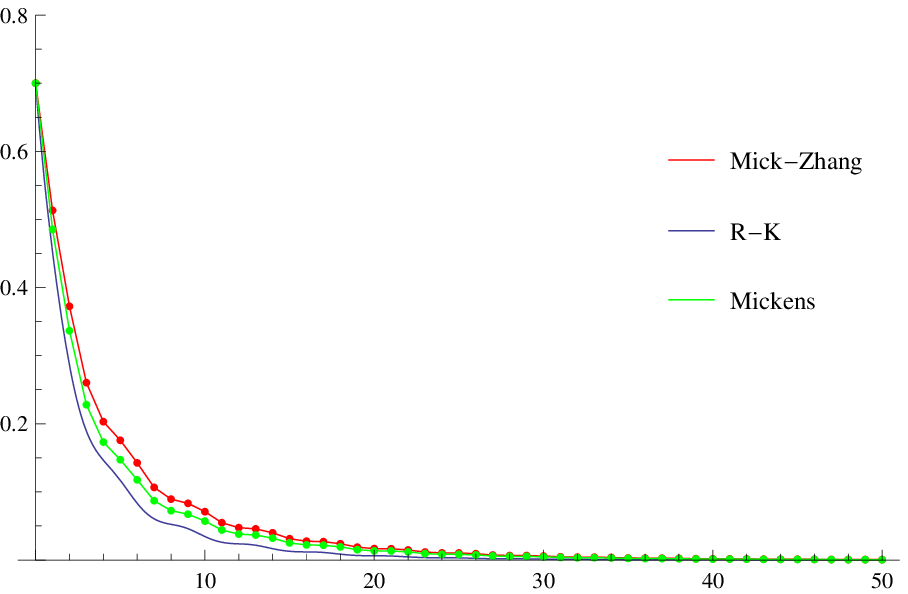}
  \end{minipage}
  \begin{minipage}[b][2.7cm]{.3\linewidth}
        \includegraphics[scale=0.4]{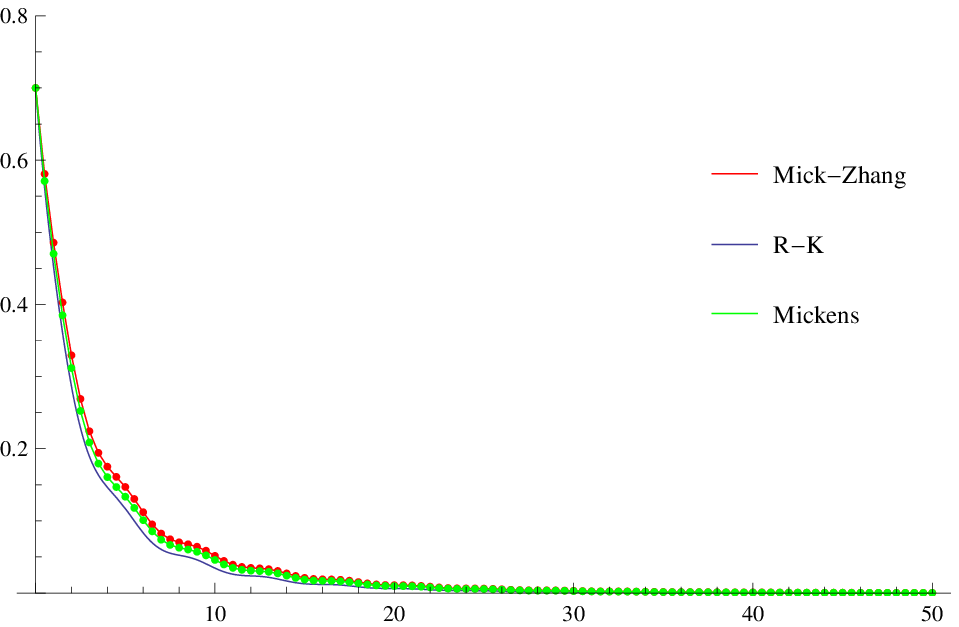}
  \end{minipage}
\caption{$SI$; $\phi(h)=h+0.2 h^2$; step-size: $4$, $1$, $0.5$.}
\label{fig-compare-Mick-MickZhang-ext}
\end{figure}

Changing $b$ to $0.9$ we obtain $\mathcal R^\ell_C(4)=3.4>0$ and thus we conclude that we have persistence.
Taking time-steps equal to $2$, $1$ and $0.5$, we get $\mathcal R_D^\ell(1,2)=3.201>1$, $\mathcal R_D^\ell(3,1)=5.9>1$ and $\mathcal R_D^\ell(7,0.5)=10.2>1$ and we conclude that we have persistence for all these time steps. Figure~\ref{fig-compare-Mick-MickZhang-per} illustrates this situation. Figure~\ref{fig-compare-Mick-MickZhang-ext} and figure~\ref{fig-compare-Mick-MickZhang-per} suggest that numerically our model is slightly better that Zhang's model, at least for large time steps.

\begin{figure}[h]
  \begin{minipage}[b][2.7cm]{.3\linewidth}
    \includegraphics[scale=0.4]{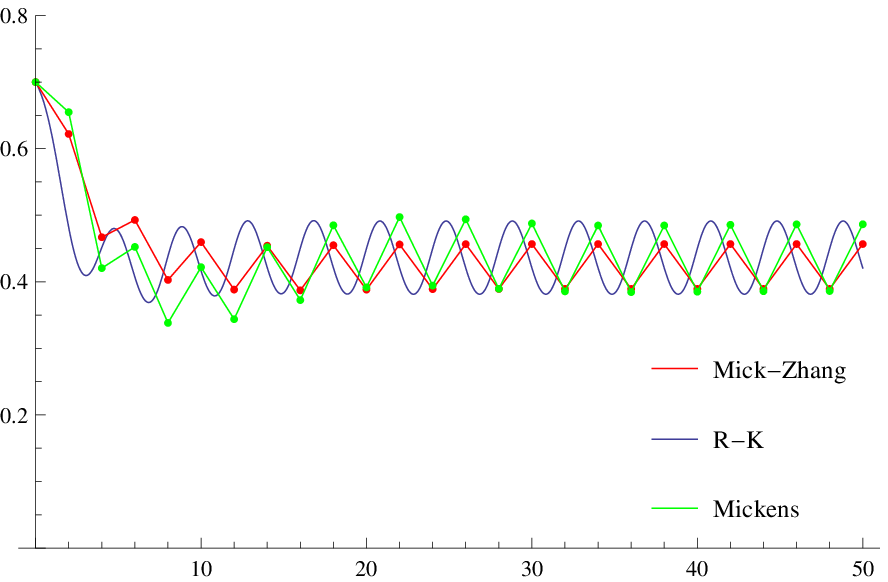}
  \end{minipage}
  \begin{minipage}[b][2.7cm]{.3\linewidth}
        \includegraphics[scale=0.4]{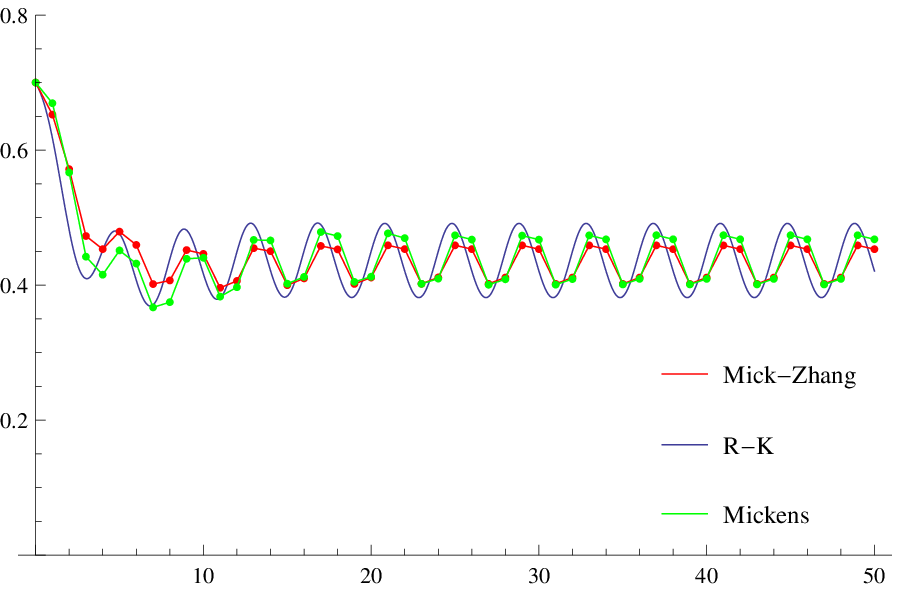}
  \end{minipage}
  \begin{minipage}[b][2.7cm]{.3\linewidth}
        \includegraphics[scale=0.4]{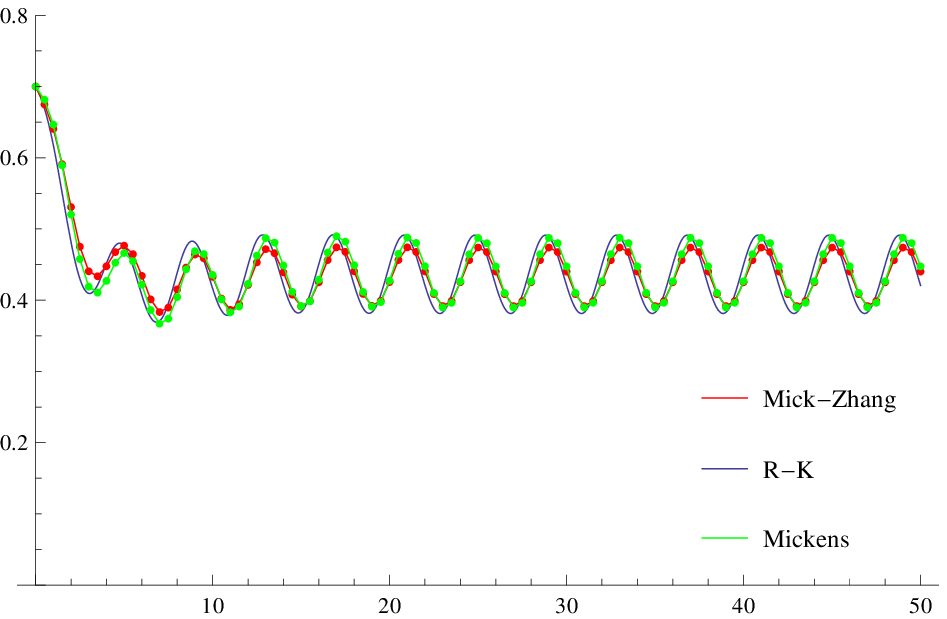}
  \end{minipage}
\caption{$SI$; $\phi(h)=h+0.2 h^2$; step-size: $2$, $1$, $0.5$.}
\label{fig-compare-Mick-MickZhang-per}
\end{figure}

Next, we compare our model with the discretized model obtained by Euler method and the output of the Mathematica$^\circledR$ solver ODE (that uses a Runge-Kutta method). Considering $b=0.3$, we get extinction for the continuous time model, as we already saw. Taking time steps equal to $2$, $1$ and $0.5$, we can see in figure~\ref{fig-compare-Mick-Euler-RK-ext} that for all methods considered and all time steps we have extinction, although the behaviour of our model shadows better the behaviour given by Mathematica's solver, at least for these time steps.
\begin{figure}[h]
  \begin{minipage}[b][2.7cm]{.3\linewidth}
    \includegraphics[scale=0.4]{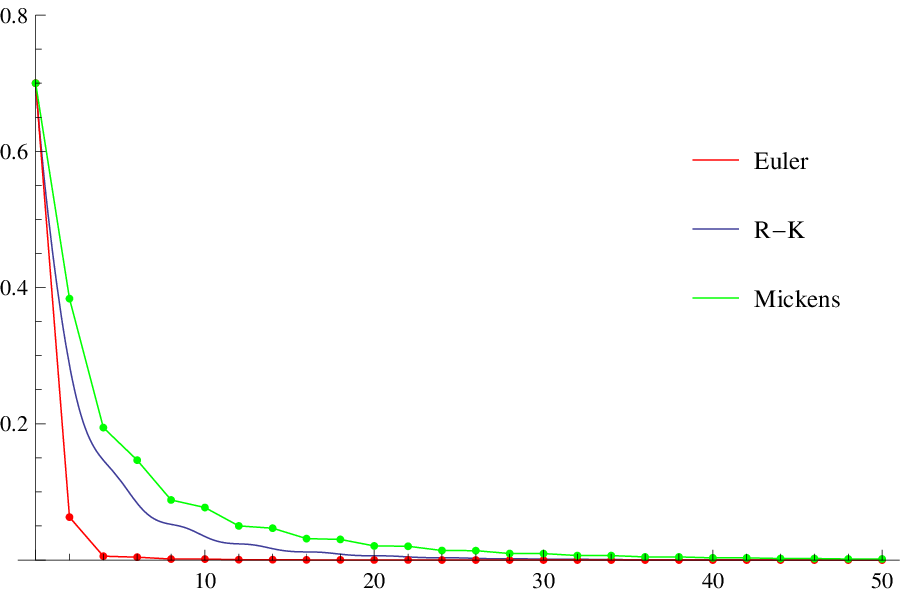}
  \end{minipage}
  \begin{minipage}[b][2.7cm]{.3\linewidth}
        \includegraphics[scale=0.4]{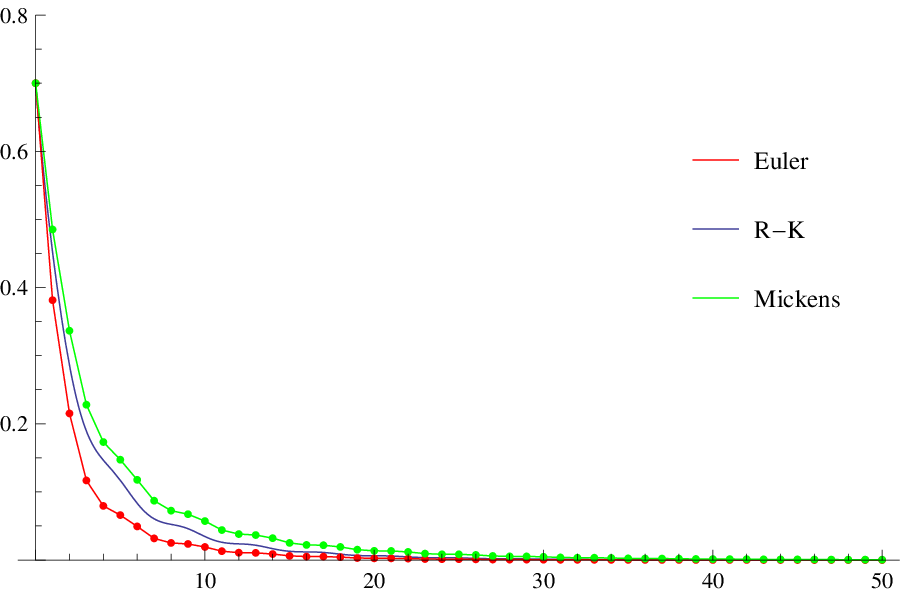}
  \end{minipage}
  \begin{minipage}[b][2.7cm]{.3\linewidth}
        \includegraphics[scale=0.4]{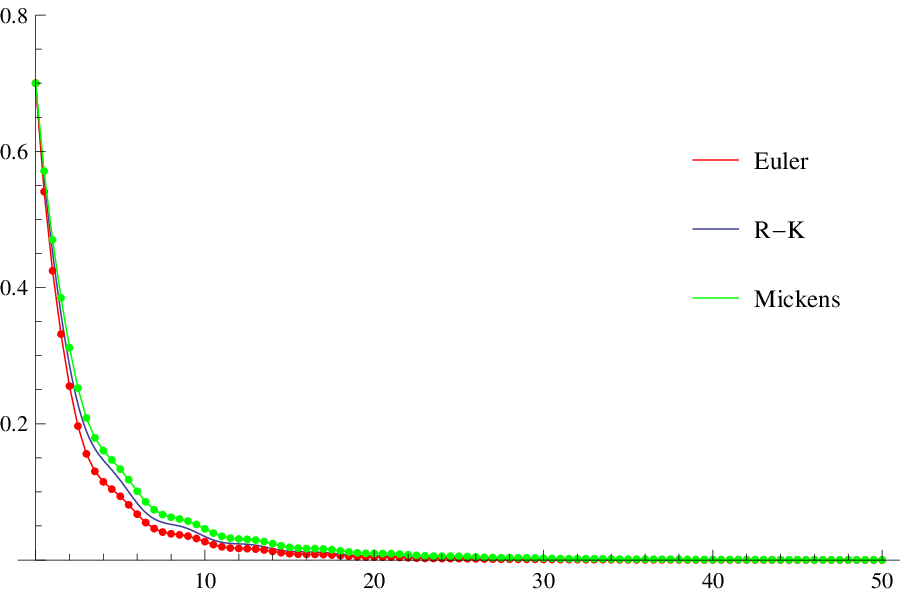}
  \end{minipage}
\caption{$SI$; $\phi(h)=h+0.2 h^2$; step-size: $2$, $1$, $0.5$.}
\label{fig-compare-Mick-Euler-RK-ext}
\end{figure}

Changing $b$ to $0.9$ we already saw that we get persistence for the continuous model.
Figure~\ref{fig-compare-Mick-Euler-RK-per} illustrates this situation.
\begin{figure}[h]
  \begin{minipage}[b][2.7cm]{.3\linewidth}
    \includegraphics[scale=0.4]{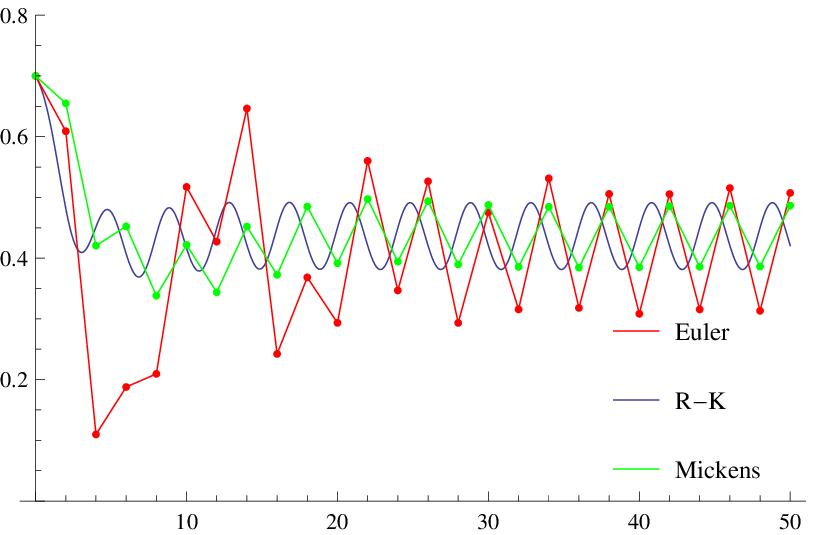}
  \end{minipage}
  \begin{minipage}[b][2.7cm]{.3\linewidth}
        \includegraphics[scale=0.4]{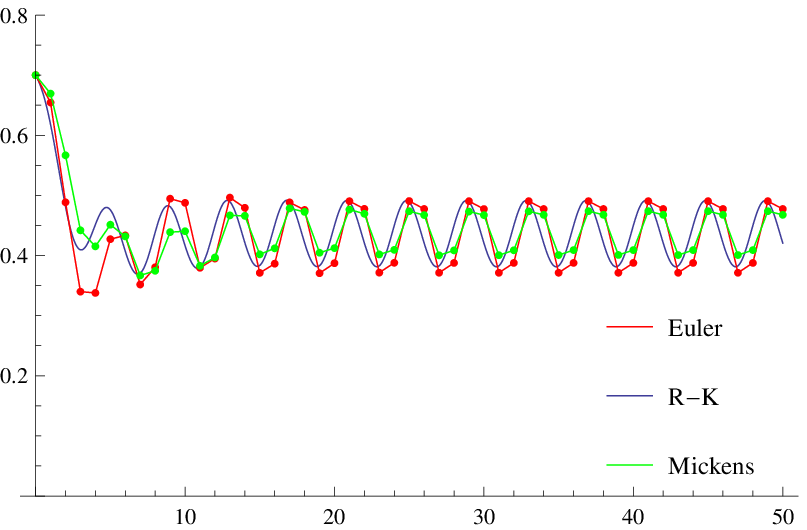}
  \end{minipage}
  \begin{minipage}[b][2.7cm]{.3\linewidth}
        \includegraphics[scale=0.4]{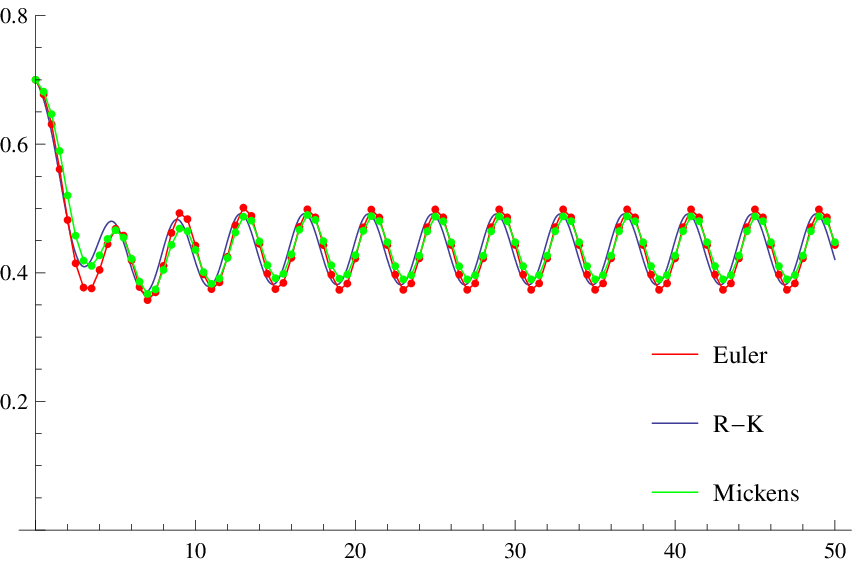}
  \end{minipage}
\caption{$SI$; $\phi(h)=h+0.2 h^2$; step-size: $2$, $1$, $0.5$.}
\label{fig-compare-Mick-Euler-RK-per}
\end{figure}

Next, we change our incidence function and consider $\phi(S,I)=SI/(1+0.7I)$, maintaining the set of parameters. Letting $b=0.3$ we have extinction for the continuous model and letting $b=0.9$ we have persistence for the continuous time model. Note that the thresholds $\mathcal R_C^u$, $\mathcal R_C^\ell$, $\mathcal R_D^u$ and $\mathcal R_D^\ell$ are similar to the mass action case. Figures~\ref{fig-compare-Mick-Euler-RK-ext-Hol} and~\ref{fig-compare-Mick-Euler-RK-per-Hol} illustrate this situation.
\begin{figure}[h]
  \begin{minipage}[b][2.7cm]{.3\linewidth}
    \includegraphics[scale=0.4]{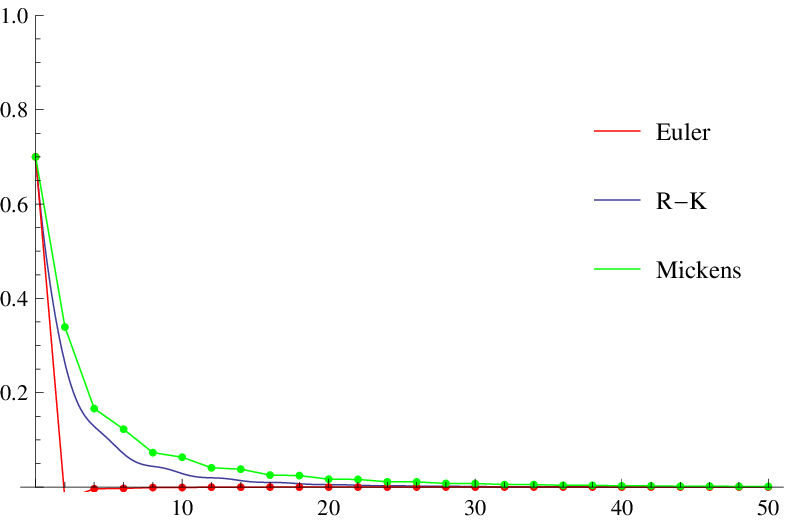}
  \end{minipage}
  \begin{minipage}[b][2.7cm]{.3\linewidth}
        \includegraphics[scale=0.4]{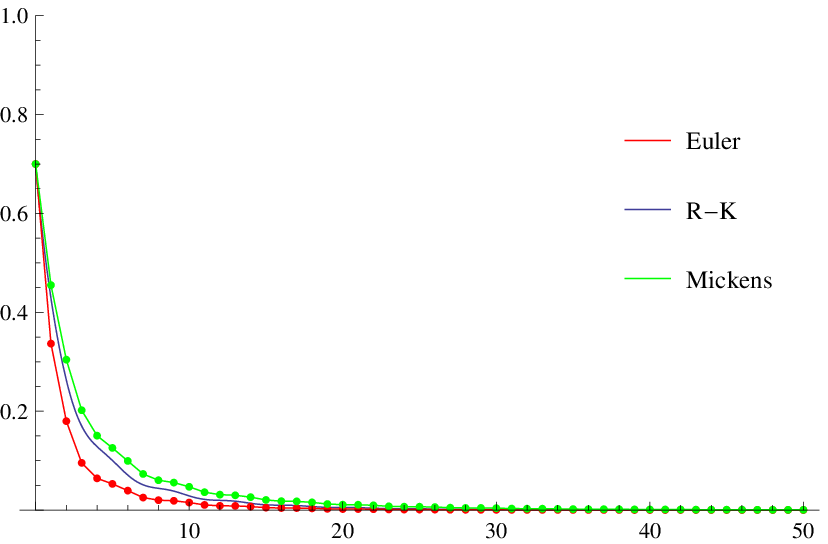}
  \end{minipage}
  \begin{minipage}[b][2.7cm]{.3\linewidth}
        \includegraphics[scale=0.4]{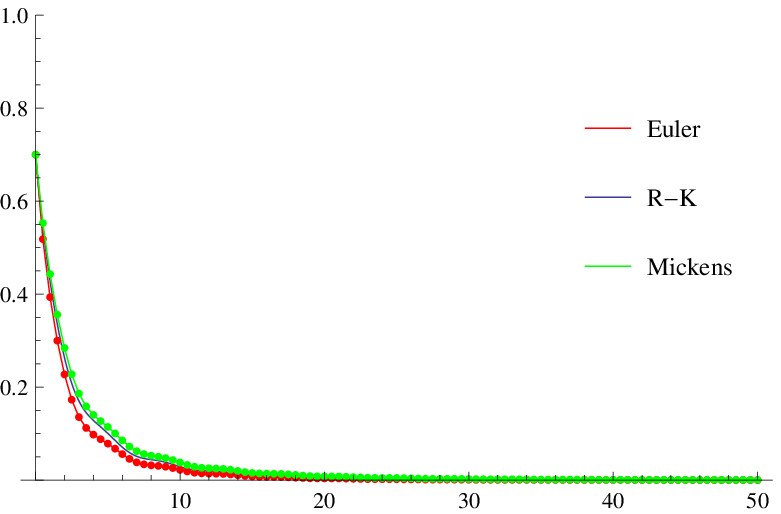}
  \end{minipage}
\caption{$\frac{SI}{1+0.7I}$; $\phi(h)=h+0.2 h^2$; step-size: $2$, $1$, $0.5$.}
\label{fig-compare-Mick-Euler-RK-ext-Hol}
\end{figure}
\begin{figure}[h]
  \begin{minipage}[b][2.7cm]{.3\linewidth}
    \includegraphics[scale=0.4]{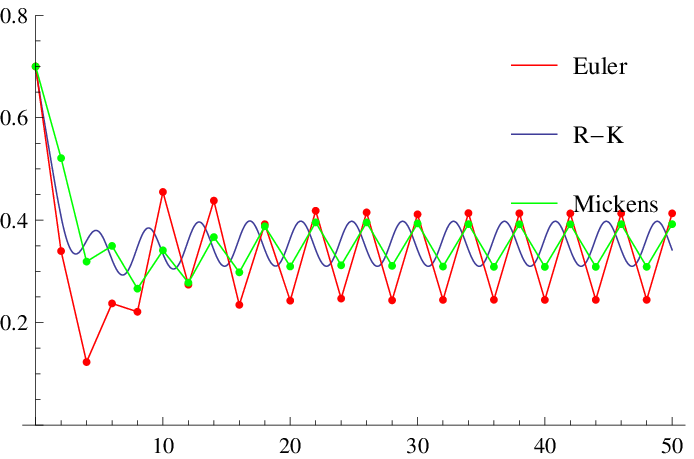}
  \end{minipage}
  \begin{minipage}[b][2.7cm]{.3\linewidth}
        \includegraphics[scale=0.4]{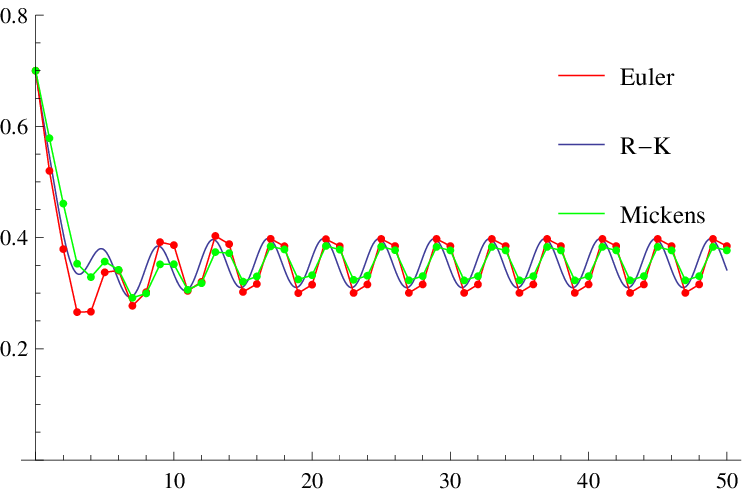}
  \end{minipage}
  \begin{minipage}[b][2.7cm]{.3\linewidth}
        \includegraphics[scale=0.4]{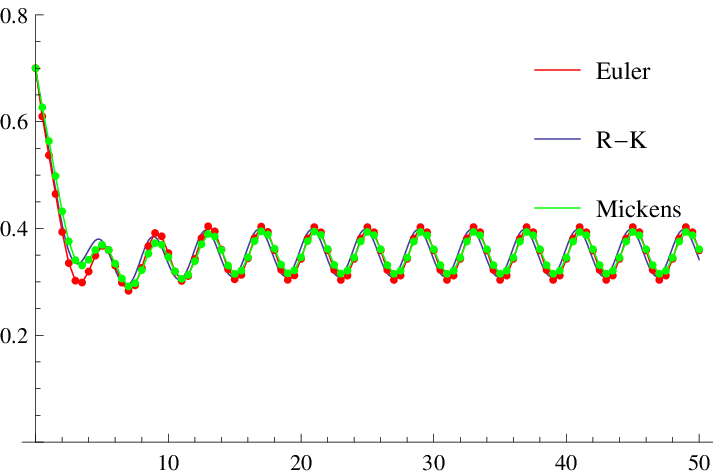}
  \end{minipage}
\caption{$\frac{SI}{1+0.7I}$; $\phi(h)=h+0.2h^2$; step-size: $2$, $1$, $0.5$.}
\label{fig-compare-Mick-Euler-RK-per-Hol}
\end{figure}

Doing corresponding simulations and comparisons for our model with $\phi(h)=(1-\e^{-0.002h})/(0.002)$ instead of $\phi(h)=h+0.2h^2$ we can draw the same conclusions regarding extinction/persistence, relation to Zhang's model and the model obtained by Euler method.

\subsection{Simulation with real data} \label{subsection:simulation2}
In this subsection, we present some simulation regarding measles. This disease is endemic in some countries such as France. In that country, with the measles outbreak in 2011, it was introduced a vaccination policy that lowered the number of reported cases. We will focus on measles in France, between 2012-2016. For a study concerning the period before 2012 see \cite{Bacaer-2014}. For our parameters estimation, we gathered information from several websites. We considered standard incidence functions $\psi(V_{n+1}, I_n) =  {V_{n+1}I_{n}}/{P_n}$ and $\varphi(S_{n+1}, I_n)={S_{n+1}I_n}/{P_n}$, where $P_n$ is the total population. Inspired in the time series for the infectives (https://ecdc.europa.eu), we considered $\sigma_n=0.03$ and $\beta_n$ given by

\[
 \beta_n=\left\{
            \begin{array}{ll}
              3.8+10 \sin\left(\frac{(n+1)\pi}{6}\right), & \hbox{if $\Bigl \lfloor\frac{n}{12} \Bigr \rfloor \leq 5$} \\
              2.7, & \hbox{otherwise}
            \end{array}.
          \right.
\]
The remaining parameters were considered time independent and were inspired in data contained in the websites www.worldbank.org, https://data.oecd.org and   www.geoba.se. Namely, we took the mortality rate $\mu_n=0.0007$, the newborns $\Lambda_n=50000$, the disease induced mortality $\alpha_n=0.000375$,  the immunity loss $\eta_n=0.001$,  the vaccination rate $p_n=0.001$  and the recovery rate $\gamma_n=0.957$. We used the initial conditions $S_0=7.20428\times10^6$, $I_0=106$, $V_0=5.84372\times10^7$ and $R_0=1.81918\times10^4$. In Figure~\ref{fig-Measles-Simul} we plot the real data for the infectives and the output given by our model.

\begin{figure}[h]
\centering
    \includegraphics[scale=0.7]{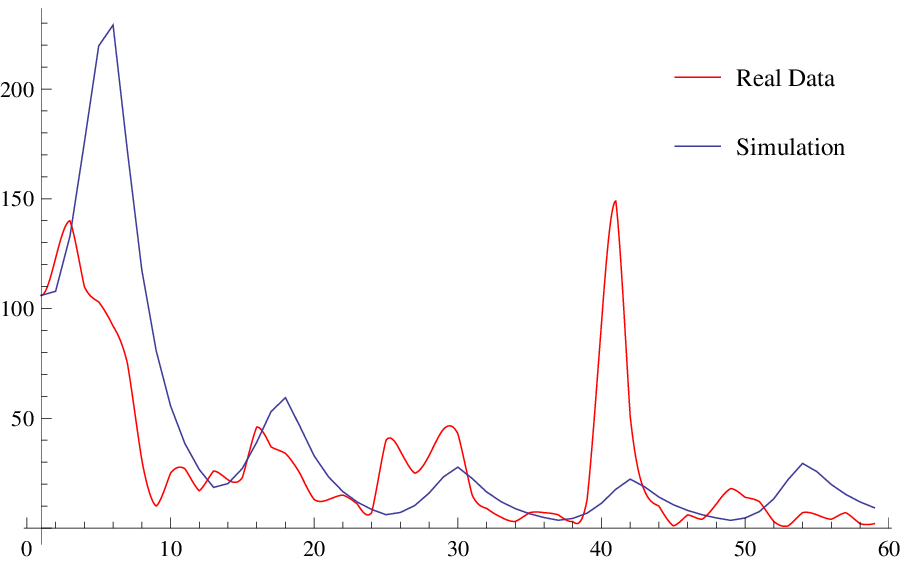}
    \caption{Measles (2012-2016), Simulation}
    \label{fig-Measles-Simul}
\end{figure}

Can be seen, in a general way,  that our model behaves in the same manner as the real data. It seems that if the vaccination policy in France, continues to be very strict, it may decrease the number of cases.
\section{conclusions} \label{section:conclusions}
We considered a discretization procedure, based on Mickens NSFD scheme, to get a discrete-time model from a continuous time with vaccination and incidence given by a general function. For a family of models containing the previous discrete-time model, we achieved results on the persistence and the extinction of the disease (Theorems~\ref{teo:Extinction} and~\ref{teo:Permanence}). They contain the results of Zhang~\cite{Zhang-AMC-2015} as a particular case. Our threshold conditions depend on the parameters of the model and of the incidence function derivative, with respect to the infectives, computed on some disease-free solution. This agrees with the continuous counterparts of these results~\cite{Pereira-Silva-Silva}.

We also considered the problem of establishing the consistency of the continuous-time model and the discrete-time model for small time-steps, in the sense that if the time step is small enough when we have persistence (respectively extinction) for the continuous-time model we also have persistence (respectively extinction) for the discrete-time model (at least for situations where Theorems~\ref{teo:Permanence-Pereira-Silva-Silva} and~~\ref{teo:Extinction-Pereira-Silva-Silva} allow us to conclude that we have persistence or extinction). Assuming the differentiability of parameters, our result on this direction, Theorem~\ref{teo:consitency}, furnishes an interval $[0, a]$, where $a$ depend only on the parameters of the model and their derivatives, where there is consistency.

We present an example of a periodic system of period $1$ where the continuous and the discrete-time system with time-step $h=1/L$ are not consistent. Namely, for that time-step,  we will have persistence for the continuous time model and extinction for the discrete-time model. These examples show the importance of knowing that for time steps smaller than some explicit value we have consistency, a type of result like the one in Theorem~\ref{teo:consitency}.

Finally, we carried out some simulations to illustrate our results. As one might expect our simulations furnish evidence that we may have consistency in intervals whose length are several times bigger than the length of the given interval in Theorem~\ref{teo:consitency}. Additionally, we used our model to describe a real situation, namely the case of measles incidence in France in the period 2012-2016, and compared our results with the real-time series for the infectives. We found in general, the predictive behaviour of our model very similar to the real data.

\appendix
\section{Proof of the results in Section~\ref{section:PE}} \label{Appendix:A}

We begin this section by noting a simple consequence of our assumptions that will be used several times throughout the proofs: it follows from H\ref{cond-P1}) that there are constants $K>0$ and $\theta \in ]0,1[$ such that
\begin{equation}\label{cond-P1-a}
\prod_{k=m}^{n-1} \dfrac{1}{1+\mu_k} < K \theta^{n-m},
\end{equation}
for $m,n \in \N$ sufficiently large. Additionally, using H\ref{cond-H1}), H\ref{cond-H2}) and H\ref{cond-P3}), we have
\[
\begin{split}
\frac{\vphi(x,y)}{y}
& = \frac{\vphi(x,y)-\vphi(x,0)}{y-0} \le \frac{\vphi(x,y)-\vphi(x,0)}{y-0} \le \partial_2\vphi(x,0) \\
& = |\partial_2\vphi(x,0)-\partial_2\vphi(0,0)| \le k_\vphi x
\end{split}
\]
and thus
\begin{equation}\label{cond-P3-a1}
\vphi(x,y) \le k_\vphi xy.
\end{equation}
Similarly
\begin{equation}\label{cond-P3-a2}
\psi(x,y) \le k_\psi xy.
\end{equation}

We will now proceed with the proof of the results in section~\ref{section:PE}.\\[.5cm]

\begin{proof}[Proof of Lemma~\ref{lema:system}]
Let $S_n>0$, $I_n>0$, $R_n>0$ and $V_n>0$. By~\eqref{eq:ProblemaPrincipal}, \eqref{cond-P3-a1} and~\eqref{cond-P3-a2}, we obtain
    \[
    V_{n+1}=\dfrac{V_n + p_n S_{n+1}}{1+\mu_n+\eta_n + \sigma_n k_{\psi}I_n}
    \]
    and thus
    \[
    S_{n+1} \ge \dfrac{\eta_n(V_n+\Lambda_n + S_n)+(\Lambda_n + S_n)(1 + \mu_n + \sigma_n k_{\psi}I_n)}{(1+\mu_n+\beta_nk_{\vphi}I_n)(1+\mu_n+\eta_n+\sigma_n k_{\vphi}I_n) + p_n(1 + \mu_n + \sigma_n k_{\psi}I_n)}.
    \]
Therefore we conclude that $S_{n+1}>0$ and $V_{n+1}>0$. By the second and third equations in~\eqref{eq:ProblemaPrincipal} we obtain
    \[
    I_{n+1} \ge \dfrac{I_n}{1+\mu_n+\alpha_n+\gamma_n}
    \]
and
    $$R_{n+1}=\dfrac{\gamma_nI_{n+1}+R_n}{1+\mu_n}$$
and we conclude that $I_{n+1}>0$ and $R_{n+1}>0$.
The previous inequalities allow us to conclude by induction that $S_n >0$, $I_n >0$, $R_n >0$ and $V_n >0$ for all $n \in \N$. In the same way we can conclude that, if $S_0\ge 0$, $I_0 \ge0$, $R_0 \ge0$ and $V_0 \ge0$, then $S_n \ge0$. $I_n \ge0$, $R_n \ge0$ and $V_n \ge0$ for all $n \in \N$. This proves~\ref{cond-1-bs}) and~\ref{cond-2-bs}) in Lemma~\ref{lema:system}.

 By~\eqref{eq:ProblemaPrincipal}, we have
$$N_{n+1} \le \dfrac{\Lambda_n}{1+\mu_n}+\dfrac{N_n}{1+\mu_n},$$
where $N_n=S_n+I_n+R_n+V_n$ is the total population. By Lemma 2 in~\cite{Zhang-Teng-Gao-AA-2008} we obtain the result.
\end{proof}
\ \\
\begin{proof}[Proof of Lemma~\ref{lema:indep}]
To show that $\mR_D^\ell(\xi^{*},\lambda)$ is independent of the selection of $\xi^{*}=(x_n^{*},y_n^{*})$, a  fixed solution of~\eqref{eq:SistemaAuxiliar}, it is important to note that according to~\ref{cond-2-aux}) in Lemma~\ref{lema:auxSystem}, for any $\varepsilon>0$ and any solution $\xi=(x_n,y_n)$ of system~\eqref{eq:SistemaAuxiliar} with initial value $x_0>0$, $y_0 >0$, there exists an $N \in \N^{+}$ such that, for $k \geq N$, we have $|x_{k}-x^{*}_{k}| \leq \varepsilon$ and $|y_{k}-y^{*}_{k}| \leq \varepsilon$. Hence
$$x^{*}_{k}- \varepsilon \leq x_{k} \leq x^{*}_{k} + \varepsilon \qquad \qquad y^{*}_{k}- \varepsilon \leq y_{k} \leq y^{*}_{k} + \varepsilon. $$

By~H\ref{cond-H1}), we have
$$|\partial_2 \vphi(x_{k},0)- \partial_2 \vphi(x^{*}_{k},0)| \leq k_{\vphi}|x_{k}-x^{*}_{k}|\leq k_{\vphi}\varepsilon$$
and
$$|\partial_2 \psi(y_{k},0)- \partial_2 \psi(y^{*}_{k},0)| \leq k_{\psi}|y_{k}-y^{*}_{k}|\leq k_{\psi}\varepsilon.$$
So,
$$\partial_2 \vphi(x^{*}_{k},0)-k_{\vphi}\varepsilon \leq \partial_2 \vphi(x_{k},0) \leq \partial_2 \vphi(x^{*}_{k},0)+ k_{\vphi}\varepsilon  $$
 and
$$\partial_2 \psi(y^{*}_{k},0)-k_{\psi}\varepsilon \leq \partial_2 \psi(y_{k},0) \leq \partial_2 \psi(y^{*}_{k},0)+ k_{\psi}\varepsilon.  $$
Combining the previous computations, we get
\begin{equation}\label{eq:enquadramento}
\begin{split}
 & \frac{1+\beta_k \partial_2 \vphi(x^{*}_{k+1},0)+\sigma_k \partial_2 \psi(y^{*}_{k+1},0)- \overline{L}_k\varepsilon}{1+\mu_k+\alpha_k+\gamma_k} \\
   & \leq \frac{1+\beta_k \partial_2 \vphi(x_{k+1},0)+\sigma_k \partial_2 \psi(y_{k+1},0)}{1+\mu_k+\alpha_k+\gamma_k} \\
   & \leq \frac{1+\beta_k \partial_2 \vphi(x^{*}_{k+1},0)+\sigma_k \partial_2 \psi(y^{*}_{k+1},0) + \overline{L}_k\varepsilon}{1+\mu_k+\alpha_k+\gamma_k},
\end{split}
\end{equation}
where $\overline{L}_k=\beta_k k_{\vphi} + \sigma_k k_{\psi}.$

Let $$ r_k = \frac{1+\beta_k \partial_2 \vphi(x^{*}_{k+1},0)+\sigma_k \partial_2 \psi(y^{*}_{k+1},0)}{1+\mu_k+\alpha_k+\gamma_k}.$$
Using~H\ref{cond-C1}) and~\ref{cond-4-aux}) in Lemma~\ref{lema:auxSystem}, it's easy to see that
$$r_k \leq  \frac{1+2\beta^u k_{\vphi}M+2\sigma^u k_{\psi}M }{1+\mu^l+\alpha^l+\gamma^l} =:r,$$
for sufficiently large $k \in \N$, and that
$$\overline{L}=\frac{\overline{L}_k}{1+\mu_k+\alpha_k+\gamma_k}\leq \frac{\beta^u k_{\vphi} + \sigma^u k_{\psi}}{1+\mu^l+\alpha^l+\gamma^l} =:C.$$
So, for sufficiently large $n$,
\[
\prod_{k=n}^{n+\lambda}\left(r_k+\frac{\overline{L}_k\varepsilon}{1+\mu_k+\alpha_k+\gamma_k}\right) \leq  \prod_{k=n}^{n+\lambda}(r_k+C\varepsilon) = \prod_{k=n}^{n+\lambda} r_k + \Theta_\eps
\]
where
\[
\Theta_\eps = \binom{\lambda +1}{\lambda} r^{\lambda}C\varepsilon+ \ldots + \binom{\lambda +1}{1} rC^{\lambda}\varepsilon^{\lambda}+ C^{\lambda+1}\varepsilon^{\lambda+1}.
\]
Analogously
\[
\prod_{k=n}^{n+\lambda}\left(r_k-\frac{\overline{L}_k\varepsilon}{1+\mu_k+\alpha_k+\gamma_k}\right) \geq \prod_{k=n}^{n+\lambda} r_k - \Theta_\eps.
\]

By~\eqref{eq:enquadramento}, we obtain
$$-\Theta_\eps+\underset{n \to +\infty}{\liminf} \prod_{k=n}^{n+\lambda}r_k \leq \mR_D^\ell(\xi^{*},\lambda) \leq \Theta_\eps+\underset{n \to +\infty}{\liminf} \prod_{k=n}^{n+\lambda}r_k.$$
Thus
$$\left|\mR_D^\ell(\xi^{*},\lambda)- \mR_D^\ell(\xi,\lambda)\right| < \Theta_\eps$$
and, by the arbitrariness of $\varepsilon$, we obtain $\mR_D^\ell(\xi,\lambda)= \mR_D^\ell(\xi^{*},\lambda)$.
Replacing $\liminf$ by $\limsup$ in the preceding argument, we reach a similar conclusion for $\mR_D^u(\xi^{*},\lambda)$.
The result follows.
\end{proof}
\ \\
\begin{proof}[Proof of Theorem~\ref{teo:Extinction}]
First note that the original system~\eqref{eq:ProblemaPrincipal} can be rewritten as follows:
\small{
\begin{equation}\label{eq:ProblemaPrincipal1}
\begin{cases}
S_{n+1}= \frac{1}{1+\mu_n+p_n}( \Lambda_n + S_n - \beta_n\vphi(S_{n+1},I_n)+\eta_nV_{n+1})\\ I_{n+1}=\frac{1}{1+\mu_n+\alpha_n+\gamma_n}(\beta_n\vphi(S_{n+1},I_n)+\sigma_n\psi(V_{n+1},I_n)+I_n) \\
R_{n+1}=\frac{1}{1+\mu_n}(\gamma_nI_{n+1} + R_n)\\
V_{n+1}=\frac{1}{1+\mu_n+\eta_n}(p_nS_{n+1}- \sigma_n\psi(V_{n+1}, I_n) + V_n)
\end{cases},
\end{equation}
}
$n=0,1,\ldots$.

Firstly, we will establish~\ref{teo:Extinction-a}).
Since $\mR_D^u(\lambda)<1$, we can choose $\varepsilon_0>0$, $\varepsilon \in ]0,1[$ and a  sufficiently large integer $N_1 \in \N$ such that
\begin{equation}\label{extintion:1}
\prod_{k=n}^{n+\lambda}\frac{1+\beta_k \partial_2 \vphi(x_{k+1},0)+\sigma_k \partial_2 \psi(y_{k+1},0)+(\beta^u k_\vphi+\sigma^u k_\psi)\varepsilon_0}{1+\mu_k+\alpha_k+\gamma_k}< \varepsilon
\end{equation}
for all $n \geq N_1.$

For any solution $(S_n,I_n,R_n,V_n)$ of~\eqref{eq:ProblemaPrincipal1} with initial conditions $S_0 >0$, $I_0>0$, $R_0>0$ and $V_0>0$, we have

\begin{equation}
\begin{cases}
S_{n+1} \leq \dfrac{\Lambda_n + \eta_n V_{n+1} +S_n}{1+\mu_n+p_n} \\[2mm]
V_{n+1} \leq \dfrac{p_n S_{n+1} +V_n}{1+\mu_n+\eta_n}
\end{cases}.
\end{equation}

By the comparison principle, we obtain $S_n \leq x_n$ and $V_n \leq y_n$ for all $n \in \mathbb{N}$, where $(x_n, y_n)$ is the solution of ~\eqref{eq:SistemaAuxiliar} with initial condition $(x_0,y_0)=(S_0,V_0)$. According to Lemma~\ref{lema:auxSystem}, the solution $(x_n^*,y_n^*)$ is globally uniformly attractive and thus, for the aforementioned $\varepsilon_0>0$, there exists an $N_2 \in \mathbb{N}$ such that
$$|x_n-x_n^*|\leq \varepsilon_0 \, \, \textrm{and} \,\,|y_n-y_n^*| \leq \varepsilon_0 \,\textrm{for all}\, n \geq N_2$$
From this, it may be concluded that
\begin{equation}\label{eq:s-maj-x*+eps0}
S_n \leq x_n^*+\varepsilon_0\,\, \textrm{and}\,\, V_n\leq y_n^*+ \varepsilon_0 \,\, \textrm{for all}\,\, n \geq N_2.
\end{equation}
By the second equation of ~\eqref{eq:ProblemaPrincipal} we get
\begin{equation}\label{eq:majora-ext-1}
\begin{split}
I_{n+1}
& = \dfrac{1}{1+\mu_n+\alpha_n+\gamma_n}\big( \beta_n \vphi(S_{n+1},I_n)+\sigma_n \psi(V_{n+1},I_n)+I_n\big) \\
& = \dfrac{1}{1+\mu_n+\alpha_n+\gamma_n}\Big(\beta_n \dfrac{ \vphi(S_{n+1},I_n)}{I_n}+\sigma_n \dfrac{\psi(V_{n+1},I_n)}{I_n}+1\Big)I_n.
\end{split}
\end{equation}
By~H\ref{cond-P3}), we have
\begin{equation}\label{eq:majora-ext-2}
\dfrac{\vphi(S_{n+1},I_n)}{I_n} \leq \partial_2 \vphi(S_{n+1},0) \,\,\textrm{ and }\,\,\dfrac{\psi(V_{n+1},I_n)}{I_n} \leq \partial_2 \psi(V_{n+1},0).
\end{equation}

By ~H\ref{cond-H1}), $x \mapsto \partial_2 \vphi(x,0)$ and $x \mapsto \partial_2 \psi(x,0)$ are non decreasing and also Lipschitz, so, using~\eqref{eq:s-maj-x*+eps0} we obtain
\[
\begin{split}
 \partial_2\vphi(S_{n+1},0)-\partial_2\vphi(x_{n+1}^*,0)
& \leq \partial_2\vphi(x^*_{n+1}+\eps_0,0)-\partial_2\vphi(x_{n+1}^*,0) \\
& = |\partial_2\vphi(x^*_{n+1}+\eps_0,0)-\partial_2\vphi(x_{n+1}^*,0)| \\
& \leq k_{\vphi} \varepsilon_0
\end{split}
\]
and thus
\begin{equation}\label{eq:AAA}
\partial_2\vphi(S_{n+1},0) \leq \partial_2\vphi(x_{n+1}^*,0) + k_{\vphi} \varepsilon_0.
\end{equation}
Analogously
\begin{equation}\label{eq:BBB}
\partial_2\psi(V_{n+1},0) \leq \partial_2 \psi(y_{n+1}^*,0) + k_{\psi} \varepsilon_0.
\end{equation}

Therefore, by~\eqref{extintion:1},~\eqref{eq:majora-ext-1},~\eqref{eq:majora-ext-2},~\eqref{eq:AAA} and~\eqref{eq:BBB} we have
$$
I_{n+1} \le \dfrac{\beta_n ( \partial_2\vphi(x_{n+1}^*,0) + k_{\vphi} \varepsilon_0)+\sigma_n (\partial_2 \psi(y_{n+1}^*,0) + k_{\psi} \varepsilon_0)+1}{1+\mu_n+\alpha_n+\gamma_n} I_n
\le \eps I_n,
$$
for all $n \geq N_2$. We conclude that $I_m \le \eps^{m-N_2}I_{N_2} \rightarrow 0$ as $m\rightarrow \infty$. This completes the proof of~\ref{teo:Extinction-a}).

Next, to establish~\ref{teo:Extinction-b}), let us consider two arbitrary solutions of the original system $(S_n^{(1)},I_n^{(1)} ,V_n^{(1)} ,R_n^{(1)} )$ and $(S_n^{(2)},I_n^{(2)} ,V_n^{(2)} ,R_n^{(2)} )$  and $\lambda$ a constant such that $R_0^u(\lambda) <1$.
Let $\iota_n=I_n^{(1)}-I_n^{(2)}$ and $\rho_n=R_n^{(1)}-R_n^{(2)}$. By~\eqref{eq:ProblemaPrincipal-disc}, we have
\begin{eqnarray*}
\rho_{n+1}-\rho_{n}
& = & (R_{n+1}^{(1)}-R_n^{(1)})-(R_{n+1}^{(2)}-R_n^{(2)}) \\
& = & \gamma_n (I_{n+1}^{(1)}-I_{n+1}^{(2)}) - \mu_n (R_{n+1}^{(1)}-R_{n+1}^{(2)})\\
& = & \gamma_n \iota_{n+1} - \mu_n \rho_{n+1}.
 \end{eqnarray*}
Because $R_0^u(\lambda) < 1$, we conclude that $\iota_n \to 0$ as $n \to +\infty$ and therefore, given $\eps>0$, there is $N \in \N$ sufficiently large such that, for $n \ge N$,
$$\rho_{n+1} + \mu_n \rho_{n+1} = \gamma_n\iota_{n+1} +\rho_{n} <  \varepsilon + \rho_{n}$$
Thus, since $\mu_n>0$, we get
$$\rho_{n+1} < \frac{\varepsilon}{1+ \mu_n} + \frac{\rho_{n}}{1+ \mu_n}.$$
and proceeding by induction
\begin{equation}\label{eq:rho2}
\rho_{n+1}
<\left(\prod_{m=0}^{n-1}\dfrac{1}{1+\mu_m}\right)\rho_0 + \eps \sum_{m=0}^{n-1} \left(\prod_{k=m}^{n-1}\dfrac{1}{1+\mu_k}\right).
\end{equation}
By~H\ref{cond-P1}) and~\eqref{cond-P1-a}, we conclude that
$$\limsup_{n \to +\infty} \rho_n = 0.$$
Thus $\rho_n = R_n^{(1)}-R_n^{(2)} \to 0$ as $n \to +\infty$.
Similar computations show that $S_n^{(1)}-S_n^{(2)} \to 0$ and $V_n^{(1)}-V_n^{(2)} \to 0$. This proves~\ref{teo:Extinction-b}) and the result follows.
\end{proof}
\ \\
\begin{proof}[Proof of Theorem~\ref{teo:Permanence}]
Since $\mathcal R_0^\ell(\lambda)>1$, there are $\eps,\eps_0>0$ such that
\begin{equation}\label{eq:cond-perm-eps0}
\liminf_{n \to +\infty} \prod_{k=n}^{n+\lambda} \frac{1+\beta_k \partial_2 \vphi(x_{k+1}^*,0)+\sigma_k \partial_2 \psi(y_{k+1}^*,0)-u}{1+\mu_k+\alpha_k+\gamma_k}>1+\eps,
\end{equation}
for all $u \in [0,\eps_0]$.

We claim that there is $\eps_1>0$ such that
\begin{equation}\label{eq:claim-persist}
\limsup_{n\to\infty}I_n>\eps_1
\end{equation}
for every solution with positive initial conditions of system~\eqref{eq:ProblemaPrincipal}.

We proceed by contradiction. Assume that~\eqref{eq:claim-persist} doesn't hold. Then, for each $\eps_1$ there is $N_1 \in \N$ and a solution $(S_n,I_n,R_n,V_n)$ with positive initial conditions such that $I_n\le \eps_1$ for all $n \ge N_1$. By~\ref{cond-3-bs}) in Lemma~\ref{lema:system}, we can assume that $S_n,V_n<M$ for all $n \ge N_1$.
By~\eqref{cond-P3-a1} we have
\begin{equation}\label{eq:maj-phi-s-v}
\vphi(S_{n+1},I_n) = k_\vphi S_{n+1}I_n \le k_\vphi M \eps_1
\end{equation}
and likewise, by~\eqref{cond-P3-a2}, we get
\begin{equation}\label{eq:maj-psi-s-v}
\psi(V_{n+1},I_n) = k_\psi V_{n+1}I_n \le k_\psi M \eps_1.
\end{equation}

By~\eqref{eq:ProblemaPrincipal},~\eqref{eq:maj-phi-s-v} and~\eqref{eq:maj-psi-s-v} we have
\begin{equation}\label{eq:SistemaAuxiliar-proof-perm-ineq}
\begin{cases}
S_{n+1}\ge \dfrac{\Lambda_n + \eta_n V_{n+1} +S_n-\beta^u k_\vphi M\eps_1}{1+\mu_n+p_n} \\[3mm]
V_{n+1}\ge \dfrac{p_n S_{n+1} +V_n-\sigma^u k_\psi M\eps_1}{1+\mu_n+\eta_n}
\end{cases}.
\end{equation}
for all $n \ge N_1$.

Given $\eps_1>0$, consider the auxiliary system
\begin{equation}\label{eq:SistemaAuxiliar-proof-perm}
\begin{cases}
x_{n+1}=\dfrac{\Lambda_n + \eta_n y_{n+1} +x_n-\beta^u k_\vphi M\eps_1}{1+\mu_n+p_n} \\[3mm]
y_{n+1}=\dfrac{p_n x_{n+1} +y_n-\sigma^u k_\psi M\eps_1}{1+\mu_n+\eta_n}
\end{cases}.
\end{equation}
For any $n_0 \in \N$ and $x_0,y_0 \in \R^+$, let $(x_n,y_n)$ be the solution of~\eqref{eq:SistemaAuxiliar} with initial condition $(x_{n_0},y_{n_0})=(x_0,y_0)$ and let $(\bar x_{n_0},\bar y_{n_0})=(x_0,y_0)$ be the solution of~\eqref{eq:SistemaAuxiliar-proof-perm} with the same initial condition.
By~\ref{cond-3-aux}) in Lemma~\ref{lema:auxSystem} we obtain
\[
\sup_{n \in \N_0} \left\{ |\bar x_n - x_n| + |\bar y_n -y_n|\right\}
\le LM(\beta^u k_\vphi+\sigma^u k_\psi)\eps_1
\]
and thus we can take $\eps_1>0$ small enough such that
\[
\sup_{n \in \N_0} \left\{ |\bar x_n - x_n| + |\bar y_n -y_n|\right\} \le \frac{\eps_0}{2}.
\]
On the other hand, by~\ref{cond-2-aux}) in Lemma~\ref{lema:auxSystem}, there is $N_2\ge N_1$ sufficiently large such that
\[
|x_n^* - x_n| + |y_n^* -y_n| \le \frac{\eps_0}{2},
\]
for all $n \ge N_2$. Therefore
\begin{equation}\label{eq:xn-xnstar+yn-ynstar}
|\bar x_n-x_n^*| + |\bar y_n-y_n^*| \le \eps_0,
\end{equation}
for all $n \ge N_2$.

Noting that~\eqref{eq:SistemaAuxiliar-proof-perm-ineq} can be written as
$$
\begin{cases}
\small S_{n+1}\ge \dfrac{\eta_n p_n(~1+\mu_n)(1+\mu_n+p_n+\eta_n)(\Lambda_n+S_n-\beta^u Mk_\vphi \eps_1)}{(1+\mu_n)(1+\mu_n+p_n)(1+\mu_n+p_n+\eta_n)}\\[3mm]
\quad\quad\quad\quad\quad\quad\quad\quad+\dfrac{(V_n-\sigma^u Mk_\psi\eps_1)(1+\mu_n+p_n)}{(1+\mu_n)(1+\mu_n+p_n)(1+\mu_n+p_n+\eta_n)}\\[3mm]
\small V_{n+1}\ge \dfrac{p_n(\Lambda_n+S_n-\beta^u Mk_\vphi \eps_1)+(V_n-\sigma^u Mk_\psi\eps_1)(1+\mu_n+p_n)}{(1+\mu_n)(1+\mu_n+p_n+\eta_n)}
\end{cases}
$$
and, using~\eqref{eq:SistemaAuxiliar-proof-perm} and~\eqref{eq:xn-xnstar+yn-ynstar}, we conclude that
\[
S_n \ge \bar x_n \ge x_n^* -\eps_0 \quad \text{and} \quad V_n \ge \bar y_n \ge y_n^* -\eps_0,
\]
for all $n \ge N_2$. Thus, since $S_n \le x_n^*$ and $V_n \le y_n^*$ for all $n \in \N$, we have
\begin{equation}\label{eq:maj-x-y-by-s-v}
|S_n - x_n^*| \le \eps_0 \quad \text{and} \quad |V_n - y_n^*| \le \eps_0,
\end{equation}
for all $n \ge N_2$. By~H\ref{cond-H1}), we have, for all $n \ge N_2$,
\[
|\partial_2 \vphi(S_{k+1},0)-\partial_2 \vphi(x^*_{k+1},0)| \le k_\vphi |S_{k+1}-x^*_{k+1}| \le k_\vphi  \eps_0
\]
and thus
\begin{equation}\label{eq:maj-phixn+1}
\partial_2 \vphi(x^*_{k+1},0) - k_\vphi \eps_0 \le \partial_2 \vphi(S_{k+1},0) \le \partial_2 \vphi(x^*_{k+1},0)+k_\vphi \eps_0.
\end{equation}
Reasoning similarly we obtain, for all $n \ge N_2$,
\begin{equation}\label{eq:maj-psiyn+1}
\partial_2 \psi(y^*_{k+1},0) - k_\psi \eps_0 \le \partial_2 \psi(V_{k+1},0) \le \partial_2 \psi(y^*_{k+1},0)+k_\psi \eps_0.
\end{equation}
By~H\ref{cond-H1}) and H\ref{cond-H2}), we conclude that
\[
\begin{split}
\vphi(S_{n+1},I_n)
& =\vphi(S_{n+1},0)+\partial_2\vphi(S_{n+1},\eps_n)(I_n-0)
= \partial_2\vphi(S_{n+1},\eps_n)I_n,
\end{split}
\]
for some $\eps_n \in [0,\eps_1]$, and all $n \ge N_2$. Thus, by continuity of $\partial_2 \vphi$,
\[
\dfrac{\vphi(S_{n+1},I_n)}{I_n}=\partial_2\vphi(S_{n+1},\eps_n) \ge \partial_2\vphi(S_{n+1},0)-\theta_1(\eps_1),
\]
with $\theta_1(\eps_1)\to 0$ as $\eps_1 \to 0$, for all $n \ge N_2$. Thus, by~\eqref{eq:maj-phixn+1}
\[
\dfrac{\vphi(S_{n+1},I_n)}{I_n}\ge \partial_2 \vphi(x^*_{k+1},0)-k_\vphi \eps_0-\theta_1(\eps_1),
\]
where $\theta_1(\eps_1)\to 0$ as $\eps_1 \to 0$. Similarly, for all $n \ge N_2$, we have, by~\eqref{eq:maj-psiyn+1}
\[
\dfrac{\psi(V_{n+1},I_n)}{I_n} \ge \partial_2 \psi(y^*_{k+1},0)-k_\psi \eps_0-\theta_2(\eps_1),
\]
where $\theta_2(\eps_1) \to 0$ as $\eps_1 \to 0$. From the second equation in~\eqref{eq:ProblemaPrincipal}, we have
\begin{equation}\label{eq:cardi}
\begin{split}
I_{n+1}
& = \dfrac{1+\beta_n\vphi(S_{n+1},I_n)/I_n+\sigma_n\psi(V_{n+1},I_n)/I_n}{1+\mu_n+\alpha_n+\gamma_n} I_n\\
& \ge \dfrac{1+\beta_n\partial_2 \vphi(x^*_{k+1},0) +\sigma_n\partial_2 \psi(y^*_{k+1},0)-u}{1+\mu_n+\alpha_n+\gamma_n} I_n.
\end{split}
\end{equation}
where
\begin{equation}\label{eq:cardi2}
u=\beta^u(k_\vphi \eps_0+\theta_1(\eps_1)) +\sigma^u(k_\psi \eps_0+\theta_2(\eps_1)).
\end{equation}
for all $n \ge N_2$. Letting $\eps_1$ be sufficiently small, we conclude, according to~\eqref{eq:cond-perm-eps0} and \eqref{eq:cardi}, that $I_n\to +\infty$ as $n \to +\infty$. A contradiction. Thus, we conclude that~\eqref{eq:claim-persist} holds.

Next we will prove the permanence of the infectives. By~\ref{cond-3-bs}) in Lemma~\ref{lema:system}, it is only necessary to prove that there is an $\eps_2>0$ such
that, for any solution $(S_n, I_n, R_n, V_n)$ of~\eqref{eq:ProblemaPrincipal} with positive initial conditions, we have
\begin{equation}\label{eq:caim-strong-persist-2}
\liminf_{n \to +\infty} I_n > \eps_2.
\end{equation}

Recall that, since $\mathcal R_0^\ell(\lambda)>1$, there are $\eps,\eps_0>0$ such that~\eqref{eq:cond-perm-eps0} holds for all $u \in [0,\eps_0]$.

If~\eqref{eq:caim-strong-persist-2} doesn't hold, then, given $\eps_0>0,$ there must be a sequence of solutions of~\eqref{eq:ProblemaPrincipal}, $((S_{n,k},I_{n,k},R_{n,k},V_{n,k})_{n\in\N})_{k \in \N}$,
with initial conditions $(S_{0,k},I_{0,k},R_{0,k},V_{0,k})$ such that
$$ \liminf_{n \to +\infty} I_{n,k} < \frac{\eps_0}{k^2}.$$

From~\eqref{eq:claim-persist}, for each $k \in \N$, there must be two sequences $(s_{m,k})_{m \in \N}$ and $(t_{m,k})_{m \in \N}$ such that
$s_{m,k} \to +\infty$ as $m \to +\infty$,
\[
0<s_{1,k}<t_{1,k}<s_{2,k}<t_{2,k}<\cdots <s_{m,k}<t_{m,k}<\ldots,
\]
\begin{equation}\label{eq:B}
I_{s_{m,k},k}>\eps_0/k, \quad I_{t_{m,k},k}<\eps_0/k^2
\end{equation}
and
\[
\frac{\eps_0}{k^2} \le I_{n,k} \le \frac{\eps_0}{k}, \ \text{for all} \ n \in [s_{m,k}+1,t_{n,k}-1] \cup \N.
\]

Given $n \in [s_{m,k},t_{m,k}-1] \cup \N$, we have
\[
\begin{split}
I_{n+1,k}
& = \dfrac{1+\beta_n\vphi(S_{n+1,k},I_{n,k})/I_{n,k}
+\sigma_n\psi(V_{n+1,k},I_{n,k})/I_{n,k}}{1+\mu_n+\alpha_n+\gamma_n} I_{n,k}\\
& \ge \dfrac{1}{1+\mu^u+\alpha^u+\gamma^u} I_{n,k}
\end{split}
\]
and therefore
\begin{equation}\label{eq:C}
\eps_0/k^2 > I_{t_{n,k},k} \ge \sigma^{t_{n,k}-s_{n,k}} I_{s_{n,k},k} > \sigma^{t_{n,k}-s_{n,k}}\eps_0/k,
\end{equation}
where
\[
\sigma = \dfrac{1}{1+\mu^u+\alpha^u+\gamma^u}.
\]
Thus, by~\eqref{eq:B} and~\eqref{eq:C},
\begin{equation}\label{eq:afastamento}
t_{n,k}-s_{n,k} \ge \dfrac{\ln k}{\ln (1/\sigma)} \to +\infty \ \text{as} \ k \to +\infty.
\end{equation}
In view of~\eqref{eq:afastamento}, we can choose $k_0 \in \N$ such that
\[
t_{n,k}-s_{n,k} > N+\lambda+1,
\]
for all $k \ge k_0$.

Letting $m$ and $k \ge k_0$ be sufficiently large and $\eps_2>0$ be sufficiently small, we may assume that
\begin{equation}\label{eq:SistemaAuxiliar-proof-perm-ineq2}
\begin{cases}
S_{n,k}\ge \dfrac{\Lambda_n + \eta_n V_{n,k} +S_{n,k}-\beta^uk_\vphi M\eps_2}{1+\mu_n+p_n} \\[3mm]
V_{n,k}\ge \dfrac{p_n S_{n,k} +V_{n,k}-\sigma^uk_\psi M\eps_2}{1+\mu_n+\eta_n}
\end{cases}.
\end{equation}
holds for all $n \in [s_{m,k}+1,t_{m,k}-1] \cap \N$.

Let $(\bar x_n, \bar y_n)$ be a solution of~\eqref{eq:SistemaAuxiliar-proof-perm} with initial value $(\bar x_{s_{m,k}},\bar y_{s_{m,k}})=(S_{s_{m,k}},V_{s_{m,k}})$. We have $S_n\ge \bar x_n$ and $V_n\ge \bar y_n$ for all
$n \in [s_{m,k},t_{m,k}] \cap \N$. Letting $\eps_2>0$ in~\eqref{eq:SistemaAuxiliar-proof-perm-ineq2} be sufficiently small and $(x_n,y_n)$ be the solution of~\eqref{eq:SistemaAuxiliar} with $x_{s_{n,k}+1}=S_{s_{n,k}+1}$ and $y_{s_{n,k}}=V_{s_{n,k}}$, we have by~\ref{cond-3-aux}) in Lemma~\ref{lema:auxSystem}
\[
|x_n-\bar x_n|+|y_n-\bar y_n| \le \frac{\eps_0}{2}
\]
for all $n \in [s_{m,k},t_{m,k}] \cap \N$. we conclude that
$$S_n\ge \bar x_n\ge x_n -\eps_0/2 > x_n^*-\eps_0$$
and
$$V_n\ge \bar y_n\ge y_n -\eps_0/2 > y_n^*-\eps_0$$
for all $n \in [s_{m,k},t_{m,k}] \cap \N$.

Proceeding like before, we obtain~\eqref{eq:maj-phixn+1} and~\eqref{eq:maj-psiyn+1} with $S_k$ and $V_k$ replaced by $S_{n+1,k}$ and
$V_{n+1,k}$ respectively and, for sufficiently large $n \in \N$,
\[
\dfrac{\vphi(S_{n+1,k},I_{n,k})}{I_{n,k}}\ge \partial_2 \vphi(x^*_{k+1},0)-k_\vphi \eps_0-\theta_1(\eps_1),
\]
and
\[
\dfrac{\psi(V_{n+1,k},I_{n,k})}{I_{n,k}} \ge \partial_2 \psi(y^*_{k+1},0)-k_\psi \eps_0-\theta_2(\eps_1),
\]
where $\theta_1(\eps_1)\to 0$ as $\eps_1 \to 0$. Therefore
\[
I_{n+1,k} \ge \dfrac{1+\beta_n \partial_2\varphi(x_{n+1}^*,0)+\sigma_n \partial_2\psi(y_{n+1}^*,0)-u}{1+\mu_n+\alpha_n+\gamma_n}I_{n,k},
\]
for all $n \in [s_{m,k},t_{m,k}] \cap \N$, where $u$ is given by~\eqref{eq:cardi2}. Thus
\[
\begin{split}
\dfrac{\eps_0}{k^2}
& > I_{t_{m,k},k} \\
& \ge I_{t_{m,k}-\lambda,k} \prod_{n=t_{m,k}-\lambda}^{t_{m,k}}
\frac{1+\beta_n \partial_2 \vphi(x_{n+1}^*,0)+\sigma_n \partial_n \psi(y_{n+1}^*,0)-u}{1+\mu_n+\alpha_n+\gamma_n}
> \dfrac{\eps_0}{k^2}
\end{split}
\]
which is a contradiction. The theorem follows.
\end{proof}

\bibliographystyle{elsart-num-sort}


\end{document}